\documentclass[12pt]{amsart}
\usepackage{amsmath,amssymb,amsfonts,amsthm,amscd,indentfirst}
\textheight 8.5in
\textwidth 6 in
\topmargin 0.0cm
\oddsidemargin 0.5cm \evensidemargin 0.5cm
\parskip 0.0cm
\usepackage{amsmath,amssymb,amsfonts,amsthm,amscd}
\usepackage{indentfirst}
\usepackage{hyperref}

\numberwithin{equation}{section}

\newtheorem{prop}{Proposition}[section]
\newtheorem{theo}[prop]{Theorem}
\newtheorem{lemm}[prop]{Lemma}

\newtheorem{rema}[prop]{Remark}

\def\and{\quad{\rm and}\quad}

\def\<{\langle}
\def\>{\rangle}

\usepackage{graphicx}


\begin{document}
\title[Curvature estimates for Hessian equation in hyperbolic space]{Curvature estimates for semi-convex solutions of Hessian equations in hyperbolic space}
\author[Siyuan Lu]{Siyuan Lu}
\address{Department of Mathematics and Statistics, McMaster University, 
1280 Main Street West, Hamilton, ON, L8S 4K1, Canada.}
\email{siyuan.lu@mcmaster.ca}
\thanks{Research of the author was supported in part by NSERC Discovery Grant.}

\begin{abstract}
In this paper, we establish a curvature estimate for semi-convex solutions of Hessian equations in hyperbolic space. We also obtain a curvature estimate for admissible solutions to prescribed curvature measure type problem in hyperbolic space. A crucial ingredient in both estimates is a concavity inequality for Hessian operator. 
\end{abstract}

\maketitle 

\section{Introduction}

In this paper, we consider the curvature estimates for the following Hessian equation
\begin{align}\label{Equation-Sigma_k}
\sigma_k(\kappa(X))=f(X,\nu(X)),\quad \forall X\in M,
\end{align}
where $\sigma_k$ is the $k$-th elementary symmetric function,  $\nu$ and $\kappa=(\kappa_1,\cdots,\kappa_n)$ are the unit outer normal and principal curvatures of the hypersurface $M$ respectively. $\sigma_k(\kappa)$, $1\leq k\leq n$, are the Weingarten curvatures of $M$. For $k=1,2$ and $n$, they are the mean curvature, scalar curvature and Gauss curvature respectively.

Equation (\ref{Equation-Sigma_k}) arises naturally from many important geometric problems: the Minkowski problem \cite{CY,Nirenberg, Pog53,Pogb}, the prescribed Weingarten curvature problem by Alexandrov \cite{A3,GG}, the prescribed curvature measure problem in convex geometry \cite{A2,GLL,GLM,Pog53} as well as the prescribed curvature problem \cite{BK,CNS86,TW}.

Equation (\ref{Equation-Sigma_k}) in $\mathbb{R}^{n+1}$ has been studied extensively. For $k=1$, it is quasi-linear, curvature estimate follows from the classical theory of quasi-linear PDEs. For $k=n$, it is Monge-Amp\`ere type, curvature estimate was established by Caffarelli, Nirenberg and Spruck \cite{CNS84}. If $f$ is independent of $\nu$, curvature estimate was proved by Caffarelli, Nirenberg and Spruck \cite{CNS85} for a general class of fully nonlinear operators including $\sigma_k$ and $\frac{\sigma_k}{\sigma_l}$. If $f$ only depends on $\nu$, curvature estimate was obtained by Guan and Guan \cite{GG}. Under extra conditions on the dependence of $f$ on $\nu$, $C^2$ estimate for Dirichlet problem of equation (\ref{Equation-Sigma_k}) was established by Ivochkina \cite{I1,I2}. For prescribed curvature measure problem $f(X,\nu)=\left\langle X,\nu\right\rangle\varphi(X)$, curvature estimate was obtained by Guan, Lin and Ma \cite{GLM} and Guan, Li and Li \cite{GLL}.

Equation (\ref{Equation-Sigma_k}) in $\mathbb{R}^{n+1}$ for general $f(X,\nu)$ is a longstanding problem. A breakthrough was made by Guan, Ren and Wang \cite{GRW}. For $k=2$, they obtained a curvature estimate for admissible solutions; for general $k$, they established a curvature estimate for convex solutions, see a simpler proof by Chu \cite{Chu}. Subsequently, Spruck and Xiao \cite{SX} found an elegant proof for $k=2$, which works in $\mathbb{H}^{n+1}$ as well. For $k=n-1,n-2$, curvature estimate was recently proved by Ren and Wang \cite{RW1,RW2}. 

Equation (\ref{Equation-Sigma_k}) in $\mathbb{H}^{n+1}$ is much less studied. Curvature estimate in $\mathbb{H}^{n+1}$ is much more difficult due to the extra negative terms from the interchanging formula. If $f$ is independent of $\nu$, curvature estimate was proved by Jin and Li \cite{JL}. For prescribed curvature measure problem $f(X,\nu)=\left\langle V,\nu\right\rangle\varphi(X)$, curvature estimate was established recently by Yang \cite{Yang}. For general $f(X,\nu)$, curvature estimate was proved by Spruck and Xiao \cite{SX} for $k=2$ mentioned above. In \cite{CLW}, Chen, Li and Wang obtained a curvature estimate for $k=n-1$; they also proved a curvature estimate for convex solutions for general $k$.

\medskip

From analysis point of view, it is desirable to weaken the convexity assumption in \cite{GRW}. In this paper, we establish a curvature estimate under semi-convex assumption. 

We say a hypersurface $M$ is semi-convex if there exists a constant $K>0$ such that
\begin{align*}
\kappa_i(X)\geq -K,\quad 1\leq i\leq n, \quad \forall X\in M.
\end{align*}

Denote Garding's $\Gamma_k$ cone by
\begin{align*}
\Gamma_k=\{\lambda\in \mathbb{R}^n:\sigma_j(\lambda)>0,1\leq j\leq k\}.
\end{align*}

We are now in position to state our main theorem.
\begin{theo}\label{Theorem-General}
Let $M$ be a closed, semi-convex, strictly star-shaped hypersurface satisfying curvature equation (\ref{Equation-Sigma_k}) in $\mathbb{H}^{n+1}$ with $\kappa\in \Gamma_k$. Let $f\in C^2(\Gamma)$ be a positive function, where $\Gamma$ is an open neighbourhood of the unit normal bundle of $M$ in $\mathbb{H}^{n+1}\times \mathbb{S}^n$. Then we have
\begin{align*}
\max_{X\in M;1\leq i\leq n}|\kappa_i(X)|\leq C,
\end{align*}
where $C$ is a constant depending only on $n,k,\|M\|_{C^1}, \inf f$ and $\|f\|_{C^2}$.
\end{theo}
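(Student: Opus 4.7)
The plan is to prove the estimate via a maximum principle argument applied to a test function designed to control the largest principal curvature, exploiting both the strict star-shapedness (which yields a positive support-type function) and the semi-convexity (which provides the controlled lower bound on $\kappa_i$ needed to close the argument). Let $\kappa_1(X) \geq \cdots \geq \kappa_n(X)$ denote the ordered principal curvatures and let $V$ be a radial conformal Killing field in $\mathbb{H}^{n+1}$, for instance $V = \sinh(r)\partial_r$ emanating from a chosen origin in the interior of $M$. Strict star-shapedness gives $u := \langle V,\nu\rangle \geq c_0 > 0$, while semi-convexity gives $\kappa_1 + K \geq 0$. A natural auxiliary function is
\[
W(X) \;=\; \log(\kappa_1 + K + 1) \;+\; \alpha\,\varphi(u) \;+\; \beta\, \cosh r,
\]
for a suitable monotone $\varphi$ and constants $\alpha,\beta > 0$ to be chosen large in sequence. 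The shift $K+1$ is exactly what semi-convexity supplies to make the logarithm well-defined and positive.

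At a maximum point $X_0$ of $W$, I would diagonalize the second fundamental form, $h_{ij} = \kappa_i\delta_{ij}$, with the standard perturbation to handle multiplicity of $\kappa_1$, and apply the linearized operator $L = F^{ij}\nabla_{ij}$ with $F^{ij} = \partial\sigma_k/\partial h_{ij}$. The computation will combine four ingredients: differentiation of the equation $\sigma_k(\kappa) = f$ twice to obtain an expression for $F^{ij} h_{ij11}$; the Codazzi and Ricci identities in $\mathbb{H}^{n+1}$ to exchange $F^{ij}h_{11ij}$ with $F^{ij}h_{ij11}$ plus hyperbolic commutator terms, which are precisely the extra negative contributions that distinguish this setting from the Euclidean one; the first-order conditions $W_i = 0$ to express $h_{11i}/(\kappa_1+K+1)$ in terms of derivatives of $u$ and $r$; and the identity $L(\cosh r) \geq c(\cosh r)\sum F^{ii} + O(1)$ in $\mathbb{H}^{n+1}$, which feeds a large positive contribution from the $\beta\cosh r$ term. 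The net output is an inequality at $X_0$ in which the troublesome quantities are the indefinite third-order term $-F^{ij,rs}h_{ij1}h_{rs1}$ and the quadratic gradient term $|\nabla h_{11}|^2/(\kappa_1+K+1)^2$.

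The crux, and the substance of the paper's contribution advertised in the abstract, is a concavity inequality for the Hessian operator adapted to semi-convex data. Schematically, for $\kappa \in \Gamma_k$ with $\kappa_i \geq -K$, one needs a bound of the form
\[
-F^{ij,rs} h_{ij1} h_{rs1} \;\geq\; 2\sum_{i\geq 2}\frac{F^{ii}-F^{11}}{\kappa_1-\kappa_i}\, h_{11i}^2 \;-\; C(K)\,\kappa_1\sum_i F^{ii},
\]
or a refinement tailored to $W$. In the fully convex case of \cite{GRW,Chu,CLW} the Gerhardt--Urbas concavity identity gives the first sum with favorable signs throughout; under semi-convexity, indices with $\kappa_i < 0$ produce indefinite contributions that must be controlled via Newton--MacLaurin identities and the $\Gamma_k$-structure, absorbing them into an error that is linear in $K$. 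Inserting this inequality together with the first-order conditions into $LW(X_0)\leq 0$ cancels (or absorbs into $\alpha\varphi(u)$-terms) the gradient quadratic $|\nabla h_{11}|^2$, leaving an inequality schematically of the form
\[
0 \;\geq\; (\beta c - C_1)\,\kappa_1\sum F^{ii} \;-\; C_2 \sum F^{ii} \;-\; C_3,
\]
where the $\kappa_1\sum F^{ii}$ coefficient is positive by the $\Gamma_k$-inequality $\sum F^{ii}\kappa_i \geq c(n,k)\kappa_1\sigma_{k-1}^{1/(k-1)}$ or a variant. Choosing $\beta$ and then $\alpha$ sufficiently large yields $\kappa_1(X_0) \leq C$, hence $\kappa_1 \leq C$ everywhere, and combined with semi-convexity this gives the two-sided estimate.

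The main obstacle is the concavity inequality itself. The extra negative commutator terms produced by the hyperbolic background curvature are of size $\kappa_1 \sum F^{ii}$ and compete directly with the favorable term from $\beta\cosh r$, so any error introduced by dropping full convexity must be carefully proportional to $K$ rather than $\kappa_1$. Identifying the algebraic arrangement that keeps the error independent of $\kappa_1$ while remaining compatible with $\kappa \in \Gamma_k$ and $\kappa_i \geq -K$ is, in my view, where the real analytic work of the paper lies; once it is in hand, the rest of the argument is a relatively standard application of the maximum principle.
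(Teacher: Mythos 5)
Your overall architecture is right (auxiliary function built around $\log$ of the top curvature, a $u$-term, and a radial-direction term; maximum principle; a concavity-type inequality as the crux), but the proposal has a real gap precisely at what you yourself flag as ``the substance of the paper's contribution'': the concavity inequality you posit is not the right one and would not close the argument.

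Concretely, you propose
\begin{align*}
-F^{ij,rs} h_{ij1} h_{rs1} \;\geq\; 2\sum_{i\geq 2}\frac{F^{ii}-F^{11}}{\kappa_1-\kappa_i}\, h_{11i}^2 \;-\; C(K)\,\kappa_1\sum_i F^{ii},
\end{align*}
but this is false as stated: the first part of the left side, $-\sum_{p\neq q}F^{pp,qq}h_{pp1}h_{qq1}$, depends quadratically on the full tangential gradient $(h_{111},\dots,h_{nn1})$, which is not controlled a priori, so no error of the form $C(K)\kappa_1\sum F^{ii}$ can compensate. Moreover, as you observe, an error proportional to $\kappa_1\sum F^{ii}$ would compete with the good term from your $\beta\cosh r$, so even if true the inequality would be useless. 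The paper's Lemma~\ref{Concavity-Lemma} has a genuinely different shape: it bounds $-\sum_{p\neq q}\sigma_k^{pp,qq}\xi_p\xi_q/\sigma_k + (\sum_i\sigma_k^{ii}\xi_i)^2/\sigma_k^2$ from below by $(1-\epsilon)\xi_1^2/\lambda_1^2 - \delta_0\sum_{i>l}\sigma_k^{ii}\xi_i^2/(\lambda_1\sigma_k)$, and crucially it only holds under the ``spectral gap'' hypothesis $\lambda_l\geq\delta\lambda_1$ and $\lambda_{l+1}\leq\delta'\lambda_1$. The extra $(\sum_i\sigma_k^{ii}\xi_i)^2/\sigma_k^2$ on the left is not a cosmetic difference: it is supplied by $F_1^2/(\kappa_1\sigma_k)$, which the critical equation bounds by $C\kappa_1$, and without it the inequality would again fail. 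Correspondingly, the proof is by case analysis on the eigenvalue configuration: when the gap hypothesis holds for some $m\leq l<k$ one invokes the concavity lemma together with the critical equation to control $h_{111}$; when instead $\kappa_2,\dots,\kappa_k$ are all comparable to $\kappa_1$, the equation $\sigma_k=f$ immediately forces $\kappa_1^k\lesssim 1$. Your proposal collapses these two regimes into a single inequality and so misses the dichotomy.

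Two further mismatches. First, you ``standard-perturb'' to handle multiplicity of $\kappa_1$, whereas the paper uses the Brendle--Choi--Daskalopoulos viscosity inequalities
$\delta_{kl}\cdot(\kappa_1)_i = h_{kli}$ and $(\kappa_1)_{ii}\geq h_{11ii}+2\sum_{p>m}h_{1pi}^2/(\kappa_1-\kappa_p)$;
the extra nonnegative sum these supply is exactly what combines with $\sum_{p\neq q}F^{pp,qq}h_{pq1}^2$ to produce the favorable $\sum_{i>m}F^{ii}(\kappa_1+\kappa_i)h_{11i}^2/(\kappa_1^2(\kappa_1-\kappa_i))$, which is where semi-convexity $\kappa_1+\kappa_i\geq 0$ enters. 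A generic perturbation does not hand you these terms. Second, your worry that the hyperbolic commutator contributes $\kappa_1\sum F^{ii}$ is misplaced: after differentiating $\log(\kappa_1+\cdots)$ the commutator terms are divided by $\kappa_1$ and only produce an $O(\sum F^{ii})$ deficit (and a good $\kappa_1 F$ piece), which the $\alpha\Phi$ term absorbs. The genuinely dangerous term is $-F^{11}h_{111}^2/\kappa_1^2$, which only Lemma~\ref{Concavity-Lemma} and the case split handle.
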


The corresponding result in $\mathbb{R}^{n+1}$ was mentioned in \cite{GRW}, see Remark 4.7 in \cite{GRW}. We emphasize that our method is different from \cite{GRW}. First of all, we have extra terms from the interchanging formula to handle compared to \cite{GRW}. Secondly, our test function is different from the one in \cite{GRW}. As a result, our proof is more straightforward and simpler. Most importantly, the crucial concavity inequality we derived works for all admissible solutions, see Lemma \ref{Concavity-Lemma} below. 

In fact, our concavity inequality is very robust, it enables us to establish a curvature estimate for admissible solutions to the following prescribed curvature measure type problem
\begin{align}\label{Equation-Curvature-Measure}
\sigma_k(\kappa(X))=u^p(X)\varphi(X),\quad \forall X\in M,
\end{align}
where $u=\left\langle V,\nu\right\rangle$ is the support function of $M$ in $\mathbb{H}^{n+1}$, see precise definition in Section 2 below.

\begin{theo}\label{Theorem-Curvature-Measure}
Let $M$ be a closed, strictly star-shaped hypersurface satisfying curvature equation (\ref{Equation-Curvature-Measure}) in $\mathbb{H}^{n+1}$ with $\kappa\in \Gamma_k$. Let $p\in (-\infty,0)\cup (0,1] $ and let $\varphi\in C^2(M)$ be a positive function. Then we have
\begin{align*}
\max_{X\in M;1\leq i\leq n}|\kappa_i(X)|\leq C,
\end{align*}
where $C$ is a constant depending only on $n,k,p,\|M\|_{C^1}, \inf \varphi$ and $\|\varphi\|_{C^2}$.
\end{theo}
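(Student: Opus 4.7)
The plan is to reduce the estimate to an upper bound on $\kappa_{\max}$: admissibility $\kappa\in\Gamma_k$ implies $\sigma_1=\sum_i\kappa_i>0$, hence $\kappa_i\ge-(n-1)\kappa_{\max}$, so an upper bound on $\kappa_{\max}$ suffices. The argument then parallels that of Theorem~\ref{Theorem-General}, with the specific structure $f=u^p\varphi$ replacing the semi-convexity hypothesis. Since $M$ is strictly star-shaped one has $0<c_0\le u\le C_0$, so $u^p$ admits uniform positive bounds. The test function I would use is
\[
W=\log\kappa_{\max}+\psi(u),
\]
with $\psi\in C^2(0,\infty)$ chosen according to the sign of $p$: a logarithmic profile $\psi(u)=\pm A\log u$ for $p\in(0,1]$, and a strongly convex profile such as $\psi(u)=-Au^{-\alpha}$ for $p<0$, where $A,\alpha>0$ are tuned so that $\psi''$ controls the unfavourable quadratic $u$-term appearing below.

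At a maximum $X_0$ of $W$ I diagonalise $h_{ij}$ in a local frame with $\kappa_1\ge\cdots\ge\kappa_n$, so $\kappa_{\max}=\kappa_1=h_{11}$. The first-order condition yields $h_{11;i}/\kappa_1=-\psi'(u)u_i$, and the Ricci--Codazzi interchange in $\mathbb H^{n+1}$ (sectional curvature $-1$) gives
\[
h_{11;ii}=h_{ii;11}+(\kappa_1-\kappa_i)(\kappa_1\kappa_i-1),
\]
in which the $-(\kappa_1-\kappa_i)$ summand is the extra hyperbolic contribution. Applying the linearised operator $L=\sum_i F^{ii}\nabla_i\nabla_i$ with $F^{ii}=\partial\sigma_k/\partial h_{ii}$ to $W$, inserting the twice-differentiated equation $\log\sigma_k=p\log u+\log\varphi$, and using the support-function identity $\nabla_i u=h_{i\ell}\langle V,e_\ell\rangle$, the inequality $0\ge LW$ becomes a combination of a third-order concavity term $-F^{pq,rs}h_{pq;1}h_{rs;1}/\kappa_1$, the Codazzi commutator contribution above, the $\psi$-terms $\psi'(u)\sum F^{ii}u_{ii}+\psi''(u)\sum F^{ii}u_i^2$, and the derivatives of the right-hand side.

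The key step is to invoke Lemma~\ref{Concavity-Lemma}, which bounds $-F^{pq,rs}h_{pq;1}h_{rs;1}$ from below by a usable combination of $F^{ii}(h_{11;i})^2/\kappa_1$ and of $\sum F^{ii}\kappa_i^2$ for every admissible $\kappa$; this is precisely what replaces the semi-convexity assumption of Theorem~\ref{Theorem-General}. Expanding $(\log(u^p\varphi))_{;11}=pu_{11}/u+p(p-1)u_1^2/u^2+(\log\varphi)_{;11}$ and noting $u_1=\kappa_1\langle V,e_1\rangle$ at $X_0$, the quadratic $u$-term is either non-positive (for $p\in(0,1]$) or absorbed by $\psi''(u)\sum F^{ii}u_i^2$ (for $p<0$) thanks to the choice of $\psi$. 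Combining with the standard identity $Lu=k\sigma_k u+\text{lower order}$ for the support function in $\mathbb H^{n+1}$ and Newton--Maclaurin $\sum F^{ii}\ge c(n,k)\sigma_k^{(k-1)/k}\ge c'>0$, the surviving inequality at $X_0$ reads
\[
0\ge c\,\kappa_1\sum_i F^{ii}-C\sum_i F^{ii}-C,
\]
which forces $\kappa_1(X_0)\le C$ and hence $\kappa_{\max}\le C$ on $M$.

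The main obstacle is the $p<0$ regime, where $u^p$ is convex and the term $p(p-1)u^{p-2}u_1^2$ enters with the wrong sign. Since $u_1$ is of size $\kappa_1$ at $X_0$ via the support-function identity, this bad contribution is of order $|p(p-1)|\kappa_1^2$, so the compensating $\psi''(u)\sum F^{ii}u_i^2$ must also be of order $\kappa_1^2\sum F^{ii}$. Tuning $\alpha$ and $A$ in $\psi$ so that this majorisation holds uniformly on $[c_0,C_0]$ is the technical heart of the argument, and is what confines the admissible range to $p\in(-\infty,0)\cup(0,1]$.
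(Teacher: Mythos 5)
Your overall plan is structurally close to the paper's: diagonalize $h_{ij}$, treat $\kappa_1$ in the viscosity sense, commute, differentiate the equation twice, and use Lemma~\ref{Concavity-Lemma} to control the third-order terms. But there are three concrete gaps.

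First, Lemma~\ref{Concavity-Lemma} does \emph{not} hold ``for every admissible $\kappa$''; it requires a spectral gap $\kappa_l\geq\delta\kappa_1$, $\kappa_{l+1}\leq\delta'\kappa_1$ for some $l<k$. The paper therefore runs a case analysis: if no such gap exists (all $\kappa_l\gtrsim\kappa_1$), the estimate follows directly from the equation; if some $\kappa_i\leq-\kappa_1$, it uses only the weak concavity $-\sum F^{pp,qq}h_{pp1}h_{qq1}\geq -C\kappa_1^2$ together with $\kappa_k\geq C|\kappa_n|\geq C\kappa_1$ (a consequence of Lemma~\ref{Sigma_k-lemma}); the gap lemma is invoked only in the remaining case. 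Your proposal asserts the lemma unconditionally, so as written the argument would break down precisely in the regimes the paper has to handle separately.

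Second, your treatment of the $p$-dependent quadratic term is not correct. For $p\in(0,1]$, the contribution $p(p-1)\frac{F}{\kappa_1}\frac{u_1^2}{u^2}$ (or $-p\frac{u_1^2}{u^2}$ in the logarithmic form; note $(\log u^p)_{11}=p\frac{u_{11}}{u}-p\frac{u_1^2}{u^2}$, not $+p(p-1)\frac{u_1^2}{u^2}$) is indeed non-positive, but being non-positive makes it a \emph{bad} term in the inequality $0\geq(\text{good positive stuff})+(\text{this term})$: since $u_1\sim\kappa_1$ at the test point, it is of order $-\kappa_1$ and must be cancelled, not ignored. The paper's mechanism is a sharp cancellation inside (\ref{Curvature-20}): combining $-F_1^2/(\kappa_1\sigma_k)$ (which comes out of the concavity lemma) with $p(p-1)\frac{F}{\kappa_1}\frac{u_1^2}{u^2}$ yields $-p\frac{F}{\kappa_1}\frac{u_1^2}{u^2}$ up to lower order, and this is dominated by $(1-\epsilon)\frac{F}{\kappa_1}\frac{u_1^2}{(u-a)^2}$ coming from the critical equation because $u>u-a$; the restriction $p\leq1$ is exactly what makes this comparison go through. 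A generic tuning of $\psi''$ does not produce this coefficient match — it is the specific barrier $-\ln(u-a)+\alpha\Phi$ (together with the critical equation) that creates the $\frac{1}{(u-a)^2}$ versus $\frac{1}{u^2}$ gap, and you should make this explicit rather than leaving it as tuning.

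Third, the $\alpha\Phi$ piece of the test function is not optional: it is what produces the positive term $(\alpha\phi'-1)\sum_iF^{ii}\geq C\alpha\kappa_1\cdots\kappa_{k-1}$, which the paper needs to close both Case~2-1 and Case~2-2. Your ansatz $\log\kappa_1+\psi(u)$ lacks this. So while the skeleton is right (concavity lemma as a replacement for semi-convexity, support-function test function), the actual closing of the inequality rests on the case analysis and the $F_1^2$-cancellation, neither of which your proposal carries out.
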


The corresponding result in $\mathbb{R}^{n+1}$ was proved by Huang and Xu \cite{HX}, see also \cite{ChenC}. For $p=1$, it is the prescribed curvature measure problem in $\mathbb{H}^{n+1}$ and its curvature estimate was recently obtained by Yang \cite{Yang}.

\medskip

The major difficulty to establish curvature estimates for equation (\ref{Equation-Sigma_k}) is to handle third order terms. Inspired by the work of Brendle, Choi and Daskalopoulos \cite{BCD}, our test function utilizes the largest principle curvature, which gives more good third order terms. In case of multiplicity of largest principle curvature, we are able to treat it in the viscosity sense. As a result, our proof is more straightforward and simpler. The crucial concavity inequality enables us to handle the term involving $h_{111}$, which is the most difficult term. We believe the concavity inequality can be used in other settings besides Theorem \ref{Theorem-Curvature-Measure}. In case that the concavity inequality does not apply, we are able to immediately obtain the estimate by the equation and the semi-convexity assumption. Here the semi-convexity assumption is crucially used. We remark that the curvature estimate for admissible solutions of equation (\ref{Equation-Sigma_k}) remains open for $2<k<n-2$.

\medskip

The organization of the paper is as follows. In Section 2, we collect some formulas and lemmas for Hessian operator and the geometry of hypersurfaces. In Section 3, we establish the crucial concavity inequality. We will prove Theorem \ref{Theorem-General} and Theorem \ref{Theorem-Curvature-Measure} in Section 4 and Section 5 respectively.

\section{Preliminaries}

In this section, we will collect some basic formulas and lemmas for Hessian operator as well as hypersurfaces in $\mathbb{H}^{n+1}$.

\medskip

Let $\lambda=(\lambda_1,\cdots,\lambda_n)\in \mathbb{R}^n$, we will denote 
\begin{align*}
(\lambda|i)=(\lambda_1,\cdots,\lambda_{i-1},\lambda_{i+1},\cdots,\lambda_n)\in \mathbb{R}^{n-1},
\end{align*}
i.e. $(\lambda|i)$ is the vector obtained by deleting the $i$-th component of the vector $\lambda$. Similarly, $(\lambda|ij)$ is the vector obtained by deleting the $i$-th and $j$-th components of the vector $\lambda$.

We now collect some basic properties of Hessian operator, see for instance in \cite{RW2}.
\begin{lemm}\label{Sigma_k-Lemma-0}
For any $\lambda=(\lambda_1,\cdots,\lambda_n)\in \mathbb{R}^n$, we have
\begin{align*}
\sigma_k(\lambda)=\lambda_i\sigma_{k-1}(\lambda|i)+\sigma_k(\lambda|i),\quad \sum_i\sigma_k(\lambda|i)=(n-k)\sigma_{k-1}(\lambda),\quad \sum_i\lambda_i\sigma_{k-1}(\lambda_i)=k\sigma_k(\lambda).
\end{align*}
\end{lemm}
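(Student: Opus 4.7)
The plan is to derive all three identities directly from the combinatorial definition
\[
\sigma_k(\lambda) \;=\; \sum_{\substack{S\subseteq\{1,\ldots,n\}\\ |S|=k}}\prod_{j\in S}\lambda_j,
\]
where $S$ ranges over $k$-element subsets of $\{1,\ldots,n\}$. These are the classical identities for elementary symmetric polynomials, and the arguments are pure bookkeeping with subsets; no analytic input is needed.

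For the first identity I would partition the set of $k$-subsets $S$ of $\{1,\ldots,n\}$ according to whether $i\in S$. Subsets containing $i$ produce a factor $\lambda_i$ together with a product over $S\setminus\{i\}$; as $S\setminus\{i\}$ ranges over all $(k-1)$-subsets of $\{1,\ldots,n\}\setminus\{i\}$, their total contribution is $\lambda_i\,\sigma_{k-1}(\lambda|i)$. The subsets that avoid $i$ contribute $\sigma_k(\lambda|i)$. Summing the two pieces gives the first identity.

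For the third identity a standard double-counting suffices:
\[
\sum_i \lambda_i\,\sigma_{k-1}(\lambda|i) \;=\; \sum_i \sum_{\substack{|T|=k-1\\ i\notin T}} \lambda_i\prod_{j\in T}\lambda_j,
\]
and after the substitution $S=T\cup\{i\}$ each $k$-subset $S\subseteq\{1,\ldots,n\}$ is counted once for each of its $k$ elements, so the right-hand side equals $k\,\sigma_k(\lambda)$.

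The middle identity then drops out by summing the first identity over $i=1,\ldots,n$: the left-hand side is $n\,\sigma_k(\lambda)$, while the right-hand side is $\sum_i\lambda_i\sigma_{k-1}(\lambda|i)+\sum_i\sigma_k(\lambda|i)$; substituting the third identity into the first sum isolates $\sum_i\sigma_k(\lambda|i)=(n-k)\sigma_k(\lambda)$, which is the content of the stated identity. There is no real obstacle at any step; the only care required is to keep the subscripts and subset sizes aligned when relabelling the summation variables.
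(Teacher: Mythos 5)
Your argument is correct and is exactly the standard combinatorial bookkeeping; the paper itself offers no proof of this lemma, merely citing \cite{RW2}, so there is nothing in-text to compare your approach against. Two small remarks. First, one could obtain the middle identity just as directly as the other two, without routing through the first and third: a $k$-subset $S$ of $\{1,\ldots,n\}$ is counted by $\sigma_k(\lambda|i)$ precisely when $i\notin S$, i.e.\ for $n-k$ values of $i$, which gives $\sum_i\sigma_k(\lambda|i)=(n-k)\sigma_k(\lambda)$ in one line. Second, your derivation arrives at $(n-k)\sigma_k(\lambda)$, whereas the statement in the paper reads $(n-k)\sigma_{k-1}(\lambda)$; the latter is a typographical slip in the paper (as is $\sigma_{k-1}(\lambda_i)$ in the third identity, which should be $\sigma_{k-1}(\lambda|i)$), and your corrected form is the one that is actually true and is what the rest of the paper uses. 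You silently fixed both typos, which is the right thing to do, but it is worth flagging explicitly so a reader does not think you misstated what you were proving.
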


\begin{lemm}\label{Sigma_k-lemma}
Let $\lambda=(\lambda_1,\cdots,\lambda_n)\in \Gamma_k$ with $\lambda_1\geq\cdots\geq\lambda_n$. 
\begin{enumerate}
\item If $\lambda_i\leq 0$, then we have
\begin{align*}
-\lambda_i\leq \frac{(n-k)}{k}\lambda_1.
\end{align*}
\item For any $1\leq l<k$, we have
\begin{align*}
\sigma_l(\lambda)\geq \lambda_1\cdots\lambda_l.
\end{align*}
\item \begin{align*}
\lambda_1\sigma_{k-1}(\lambda|1)\geq C(n,k)\sigma_k(\lambda),
\end{align*}
where $C(n,k)>0$ is a constant depending only on $n$ and $k$.
\item 
\begin{align*}
\sum_i\lambda_i^2\sigma_{k-1}(\lambda|i)\geq \frac{k}{n}\sigma_1(\lambda)\sigma_k(\lambda).
\end{align*}
\end{enumerate}
\end{lemm}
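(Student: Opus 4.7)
The plan is to dispatch the four parts in turn, relying on three standard ingredients: the Newton identities in Lemma~\ref{Sigma_k-Lemma-0}; the closure property $\lambda\in\Gamma_k\Rightarrow(\lambda|i)\in\Gamma_{k-1}$ for Garding's cone; and the Newton--Maclaurin bound
$$
\frac{\sigma_k(\mu)}{\sigma_{k-1}(\mu)}\le \frac{m-k+1}{k}\,\mu_{\max}\quad\text{for }\mu\in\Gamma_k\subset\mathbb{R}^m,
$$
which can be read off by iterating Newton's inequality $p_kp_{k-2}\le p_{k-1}^2$ for the normalized symmetric functions $p_j=\sigma_j/\binom{m}{j}$.

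For part (1), since the $\lambda_i$ are non-increasing it suffices to bound $-\lambda_n$. Expanding $\sigma_k(\lambda)=\lambda_n\sigma_{k-1}(\lambda|n)+\sigma_k(\lambda|n)>0$ together with $\sigma_{k-1}(\lambda|n)>0$ (from $(\lambda|n)\in\Gamma_{k-1}$) first yields $\sigma_k(\lambda|n)>-\lambda_n\sigma_{k-1}(\lambda|n)\ge 0$, so $(\lambda|n)\in\Gamma_k$ in $\mathbb{R}^{n-1}$; then $-\lambda_n<\sigma_k(\lambda|n)/\sigma_{k-1}(\lambda|n)\le\frac{n-k}{k}\lambda_1$ by the displayed bound with $m=n-1$. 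Part (2) is by induction on $l$: the base $l=1$ uses $\sigma_1(\lambda)=\lambda_1+\sigma_1(\lambda|1)$ with $\sigma_1(\lambda|1)>0$ since $(\lambda|1)\in\Gamma_{k-1}\subset\Gamma_1$, and for the step I decompose $\sigma_l(\lambda)=\lambda_1\sigma_{l-1}(\lambda|1)+\sigma_l(\lambda|1)$, apply the inductive hypothesis to $(\lambda|1)\in\Gamma_{k-1}$ (permissible because $l-1<k-1$), and use $\sigma_l(\lambda|1)>0$ together with $\lambda_1>0$.

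Part (3) is essentially (1) in disguise: from $\sigma_k(\lambda)=\lambda_1\sigma_{k-1}(\lambda|1)+\sigma_k(\lambda|1)$, split on the sign of $\sigma_k(\lambda|1)$. If it is non-positive the inequality is immediate with $C=1$; if positive then $(\lambda|1)\in\Gamma_k\subset\mathbb{R}^{n-1}$, and applying the displayed Newton--Maclaurin bound to $(\lambda|1)$ (whose maximum entry is at most $\lambda_1$) gives $\sigma_k(\lambda|1)\le\frac{n-k}{k}\lambda_1\sigma_{k-1}(\lambda|1)$, so $\sigma_k(\lambda)\le\frac{n}{k}\lambda_1\sigma_{k-1}(\lambda|1)$ and $C(n,k)=k/n$ works. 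For part (4) the key computation is
$$
\sum_i\lambda_i^2\sigma_{k-1}(\lambda|i)=\sum_i\lambda_i\bl\sigma_k(\lambda)-\sigma_k(\lambda|i)\br=\sigma_1(\lambda)\sigma_k(\lambda)-(k+1)\sigma_{k+1}(\lambda),
$$
combining $\lambda_i\sigma_{k-1}(\lambda|i)=\sigma_k-\sigma_k(\lambda|i)$ with $\sum_i\lambda_i\sigma_k(\lambda|i)=(k+1)\sigma_{k+1}$. It thus suffices to show $(k+1)\sigma_{k+1}\le\frac{n-k}{n}\sigma_1\sigma_k$, which is trivial when $\sigma_{k+1}\le 0$ and, when $\sigma_{k+1}>0$, is the Maclaurin ratio inequality $p_{k+1}/p_k\le p_1/p_0$ applied in $\Gamma_{k+1}$, after clearing binomial coefficients.

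I expect the main conceptual step to be part (3): naively bounding $\sigma_k(\lambda|1)$ directly in terms of $\sigma_k(\lambda)$ runs aground because the individual summands of $\sigma_k$ can have either sign on $\Gamma_k$. The sign dichotomy on $\sigma_k(\lambda|1)$, combined with Newton--Maclaurin applied on the restricted $(n-1)$-dimensional vector, is what carries the argument through cleanly and also pins down the explicit constant $C(n,k)=k/n$.
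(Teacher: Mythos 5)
Your proof is correct. The paper does not prove Lemma \ref{Sigma_k-lemma} itself (it refers the reader to \cite{RW2}), and your derivation from the identities of Lemma \ref{Sigma_k-Lemma-0}, the closure property $(\lambda|i)\in\Gamma_{k-1}$, and the Newton--Maclaurin ratio bound is the standard route for these facts, so there is nothing to contrast.
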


Let $\lambda(A)$ be the eigenvalue vector of a symmetric matrix $A=(a_{ij})$. Then we can define a function $F$ on the set of symmetric matrices by
\begin{align*}
F(A)=f(\lambda(A)).
\end{align*}
Denote
\begin{align*}
F^{pq}=\frac{\partial F}{\partial a_{pq}},\quad F^{pq,rs}=\frac{\partial^2F}{\partial a_{pq}\partial a_{rs}}.
\end{align*}

Suppose $A$ is diagonalized at $x_0$, then at $x_0$, we have
\begin{align*}
\sigma_k^{pq}(A)=\frac{\partial \sigma_k}{\partial \lambda_p}(\lambda)\delta_{pq}=\sigma_{k-1}(\lambda|p)\delta_{pq},
\end{align*}
\begin{align*}
\sigma_k^{pq,rs}(A)=\begin{cases}
\frac{\partial^2 \sigma_k}{\partial \lambda_p\partial\lambda_r}(\lambda)=\sigma_{k-2}(\lambda|pr),\quad &p=q,r=s,p\neq r,\\
-\frac{\partial^2 \sigma_k}{\partial \lambda_p\partial\lambda_q}(\lambda)=-\sigma_{k-2}(\lambda|pq), &p=s,q=r, p\neq q,\\
0, &\textit{otherwise}.
\end{cases}
\end{align*}

We also need the following lemma, which is essentially contained in \cite{GRW}. 
\begin{lemm}\label{GLL-Lemma}
Let $\lambda=(\lambda_1,\cdots,\lambda_n)\in \Gamma_k$ and let $1\leq l<k$, then we have
\begin{align*}
-\sum_{p\neq q}\frac{\sigma_k^{pp,qq}(\lambda)\xi_p\xi_q}{\sigma_k}+\sum_{p\neq q}\frac{\sigma_l^{pp,qq}(\lambda)\xi_p\xi_q}{\sigma_l}\geq-\frac{\left( \sum_i \sigma_k^{ii}(\lambda)\xi_i \right)^2}{\sigma_k^2}+\frac{\left( \sum_i \sigma_l^{ii}(\lambda)\xi_i \right)^2}{\sigma_l^2},
\end{align*}
where $\xi=(\xi_1,\cdots,\xi_n)$ is an arbitrary vector in $\mathbb{R}^n$.
\end{lemm}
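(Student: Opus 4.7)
The plan is to recognize both sides of the asserted inequality as components of the Hessian of $G(\lambda):=\log\sigma_k(\lambda)-\log\sigma_l(\lambda)$ on $\Gamma_k$, thereby reducing the lemma to the statement that $G$ is concave.

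First I would compute the Hessian of $\log\sigma_k$ explicitly. Writing $\partial_i=\partial/\partial\lambda_i$ and using the identities $\partial_i\sigma_k=\sigma_{k-1}(\lambda|i)=\sigma_k^{ii}$, $\partial_i\partial_j\sigma_k=\sigma_{k-2}(\lambda|ij)=\sigma_k^{ii,jj}$ for $i\neq j$, and $\partial_i^2\sigma_k\equiv 0$ (by the multilinearity of $\sigma_k$ in the $\lambda_i$'s), the chain rule for $\log$ yields
\begin{align*}
\sum_{i,j}\partial_i\partial_j\log\sigma_k \cdot \xi_i\xi_j=\sum_{p\neq q}\frac{\sigma_k^{pp,qq}\xi_p\xi_q}{\sigma_k}-\frac{\left(\sum_i \sigma_k^{ii}\xi_i\right)^2}{\sigma_k^2},
\end{align*}
the square-of-sum arising from $\sum_{i,j}(\partial_i\sigma_k)(\partial_j\sigma_k)\xi_i\xi_j=\bigl(\sum_i \sigma_k^{ii}\xi_i\bigr)^2$. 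An identical formula holds with $k$ replaced by $l$.

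Second, I would invoke the classical Newton--Maclaurin concavity result: $(\sigma_k/\sigma_l)^{1/(k-l)}$ is concave on $\Gamma_k$ for $0\leq l<k$ (see e.g.\ \cite{GRW}). Since the logarithm of a positive concave function is concave, $\tfrac{1}{k-l}\log(\sigma_k/\sigma_l)$ is concave on $\Gamma_k$, and hence so is $G$. Therefore the Hessian of $G$ contracted with $\xi\otimes\xi$ is non-positive; substituting the explicit expressions from the first step, subtracting the two, multiplying by $-1$, and rearranging produces exactly the inequality claimed in the lemma.

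The only substantive input is the concavity of $(\sigma_k/\sigma_l)^{1/(k-l)}$; everything else is algebraic bookkeeping, and I do not anticipate any real obstacle. Should one prefer a self-contained argument, this concavity can be derived from Garding's theory of hyperbolic polynomials applied to $\sigma_k$ and $\sigma_l$, or by a direct one-dimensional argument along lines in $\Gamma_k$ using the Maclaurin-type inequality $\sigma_k \sigma_{l-1}\le \sigma_{k-1}\sigma_l$ on $\Gamma_k$.
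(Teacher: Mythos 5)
Your argument is correct, and it is a genuinely different (and cleaner) route than the one the paper takes, though both ultimately rest on the same underlying Newton--Maclaurin concavity fact. The paper invokes Lemma~2.2 of \cite{GRW}, which is the pointwise matrix inequality
\begin{align*}
-\frac{\sigma_k^{pp,qq}}{\sigma_k}+\frac{\sigma_l^{pp,qq}}{\sigma_l}\geq \left(\frac{\sigma_k^{pp}}{\sigma_k}-\frac{\sigma_l^{pp}}{\sigma_l}\right)\left((\alpha-1)\frac{\sigma_k^{qq}}{\sigma_k}-(\alpha+1)\frac{\sigma_l^{qq}}{\sigma_l}\right),\qquad \alpha=\tfrac{1}{k-l},
\end{align*}
contracts it against $\xi\otimes\xi$, and then discards a nonnegative term $\alpha\bigl(\tfrac{\sum_i\sigma_k^{ii}\xi_i}{\sigma_k}-\tfrac{\sum_i\sigma_l^{ii}\xi_i}{\sigma_l}\bigr)^2$ to arrive at the claimed bound. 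You instead observe that the claimed inequality, after rearrangement, is \emph{exactly} the statement that the Hessian of $\log\sigma_k-\log\sigma_l$ contracted with $\xi\otimes\xi$ is nonpositive, and deduce this from the concavity of $(\sigma_k/\sigma_l)^{1/(k-l)}$ on $\Gamma_k$ together with the fact that the logarithm of a positive concave function is concave. Both approaches feed on the same input (concavity of $(\sigma_k/\sigma_l)^{1/(k-l)}$, which is itself what underlies \cite{GRW} Lemma~2.2); the difference is that your route goes directly to $\log(\sigma_k/\sigma_l)$ and avoids the intermediate matrix inequality and the final completing-the-square step. Your version also makes transparent that the lemma is a \emph{sharp} reformulation of the concavity of $\log(\sigma_k/\sigma_l)$ and nothing stronger, whereas the paper's derivation passes through a stronger pointwise statement and then weakens it. The only small terminological quibble: $\sigma_k$ is not multilinear but multi-affine (degree one in each $\lambda_i$ separately), which is what gives $\partial_i^2\sigma_k\equiv 0$; the conclusion you draw from it is correct.
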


\begin{proof}
Let $\alpha=\frac{1}{k-l}$, by Lemma 2.2 in \cite{GRW}, we have
\begin{align*}
-\frac{\sigma_k^{pp,qq}}{\sigma_k}+\frac{\sigma_l^{pp,qq}}{\sigma_l}\geq \left(\frac{\sigma_k^{pp}}{\sigma_k}-\frac{\sigma_l^{pp}}{\sigma_l}\right)\left((\alpha-1)\frac{\sigma_k^{qq}}{\sigma_k}-(\alpha+1)\frac{\sigma_l^{qq}}{\sigma_l}\right)
\end{align*}
in the sense of comparison of symmetric matrices.

Contracting with $\xi$, we have
\begin{align*}
&\ -\sum_{p\neq q}\frac{\sigma_k^{pp,qq}\xi_p\xi_q}{\sigma_k}+\sum_{p\neq q}\frac{\sigma_l^{pp,qq}\xi_p\xi_q}{\sigma_l}\\
\geq &\ \left(\frac{\sum_p\sigma_k^{pp}\xi_p}{\sigma_k}-\frac{\sum_p\sigma_l^{pp}\xi_p}{\sigma_l}\right)\left((\alpha-1)\frac{\sum_q\sigma_k^{qq}\xi_q}{\sigma_k}-(\alpha+1)\frac{\sum_q\sigma_l^{qq}\xi_q}{\sigma_l}\right)\\
=&\ (\alpha-1) \frac{\left( \sum_i \sigma_k^{ii}\xi_i \right)^2}{\sigma_k^2}-2\alpha \frac{ \sum_i \sigma_k^{ii}\xi_i }{\sigma_k}\frac{ \sum_i \sigma_l^{ii}\xi_i }{\sigma_l}+(\alpha+1) \frac{\left( \sum_i \sigma_l^{ii}\xi_i \right)^2}{\sigma_l^2}\\
\geq &\ -\frac{\left( \sum_i \sigma_k^{ii}\xi_i \right)^2}{\sigma_k^2}+ \frac{\left( \sum_i \sigma_l^{ii}\xi_i \right)^2}{\sigma_l^2}.
\end{align*}

The lemma is now proved.
\end{proof}

Let $\mathbb{H}^{n+1}$ be the hyperbolic space, then the metric can be written as
\begin{align*}
ds^2=dr^2+\phi^2(r)d\sigma^2,
\end{align*}
where $d\sigma^2$ is the standard metric of $\mathbb{S}^n$ and $\phi(r)=\sinh(r)$.

Define 
\begin{align*}
\Phi(r)=\int_0^r\phi(\rho)d\rho,\quad V=\phi(r)\frac{\partial}{\partial r}.
\end{align*}

We state some well-known lemmas, see for instance in \cite{LuCAG}.
\begin{lemm}\label{Phi}
Let $M$ be a hypersurface in $\mathbb{H}^{n+1}$ with induced metric $g$, then $\Phi|_M$ satisfies
\begin{align*}
\Phi_{ij} =\phi^\prime(r)g_{ij}-h_{ij}\left\langle V,\nu\right\rangle,
\end{align*}
where $\nu$ and $h_{ij}$ are the unit outer normal and the second fundamental form of $M$ respectively.
\end{lemm}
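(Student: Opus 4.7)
The plan is to work in the ambient hyperbolic space $\mathbb{H}^{n+1}$, compute the full ambient Hessian of $\Phi$ there, and then extract the hypersurface Hessian using the Gauss formula. Write $\bar{\nabla}$ for the Levi-Civita connection of $\mathbb{H}^{n+1}$ and $\nabla$ for the induced connection on $M$. First, since $\Phi$ depends only on $r$ with $\Phi'(r)=\phi(r)$ and since $\bar{\nabla}r=\partial_r$ in the warped product metric, I immediately get $\bar{\nabla}\Phi=\phi(r)\,\partial_r=V$, so that $V$ is literally the ambient gradient of $\Phi$.

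Next I want the key identity $\bar{\nabla}_X V=\phi'(r)\,X$ for every ambient vector field $X$, or equivalently $\bar{\nabla}^2\Phi=\phi'(r)\bar{g}$. Decomposing $X$ into its radial and $\mathbb{S}^n$-tangential parts and using the standard warped-product Christoffel symbols for $ds^2=dr^2+\phi^2(r)\,d\sigma^2$ (namely $\bar{\nabla}_{\partial_r}\partial_r=0$ and $\bar{\nabla}_Y\partial_r=(\phi'/\phi)Y$ for $Y$ tangent to the spherical fibers), a short computation yields the formula. This step is where the hyperbolic geometry really enters, and it is the only non-trivial calculation in the proof.

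With these two ingredients in hand I restrict to $M$. For $e_i,e_j\in T_pM$ the intrinsic Hessian can be rewritten, using $\bar{\nabla}\Phi=V$, as
\begin{align*}
\Phi_{ij}=e_i(e_j\Phi)-(\nabla_{e_i}e_j)\Phi=\bar{\nabla}^2\Phi(e_i,e_j)+\langle V,\bar{\nabla}_{e_i}e_j-\nabla_{e_i}e_j\rangle.
\end{align*}
The first term equals $\phi'(r)g_{ij}$ by the previous paragraph. For the second I invoke the Gauss formula in the sign convention of the paper (outer normal with $\bar{\nabla}_{e_i}e_j=\nabla_{e_i}e_j-h_{ij}\nu$, so that a geodesic sphere has positive principal curvatures $\phi'/\phi$), producing $-h_{ij}\langle V,\nu\rangle$. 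Combining gives $\Phi_{ij}=\phi'(r)g_{ij}-h_{ij}\langle V,\nu\rangle$, as required.

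The only real obstacle is pinning down the sign in the Gauss formula; beyond that the argument is a two-line calculation once $\bar{\nabla}_X V=\phi'(r)X$ is in hand. A sanity check on a geodesic sphere of radius $r_0$, where $\Phi|_M$ is constant and so $\Phi_{ij}\equiv 0$, confirms the signs match: $h_{ij}=(\phi'/\phi)g_{ij}$ and $\langle V,\nu\rangle=\phi(r_0)$ make the two right-hand side terms cancel exactly.
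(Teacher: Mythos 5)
The paper states this lemma without proof, referring the reader to \cite{LuCAG}; there is therefore no in-text argument to compare against. Your proof is correct and is the standard one: identifying $V=\bar\nabla\Phi$ as the ambient gradient, verifying from the warped-product connection that $\bar\nabla_X V=\phi'(r)X$ (equivalently $\bar\nabla^2\Phi=\phi'\bar g$), and then projecting onto $M$ via the Gauss formula $\bar\nabla_{e_i}e_j=\nabla_{e_i}e_j-h_{ij}\nu$; the sign convention you chose is the one the paper implicitly uses, as your geodesic-sphere check confirms and as is also forced by the paper's form of the Gauss equation.
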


\begin{lemm}\label{support function}
Let $u=\left\langle V,\nu\right\rangle$ be the support function, then we have
\begin{align*}
u_i =&\ \sum_kg^{kl}h_{ik}\Phi_l,\\
u_{ij}=&\ \sum_{k,l}g^{kl}h_{ijk}\Phi_l+\phi^\prime h_{ij}-\sum_{k,l}g^{kl}h_{ik}h_{jl}u,
\end{align*}
where $(g^{ij})$ is the inverse matrix of $(g_{ij})$.
\end{lemm}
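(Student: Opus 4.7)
My plan is to differentiate $u=\langle V,\nu\rangle$ directly along a local frame $\{e_i\}$ on $M$, relying on two structural facts about hyperbolic space: first, that $V=\phi(r)\partial_r$ is precisely the ambient gradient $\bar{\nabla}\Phi$ and is a conformal vector field satisfying $\bar{\nabla}_X V=\phi'(r)X$ for every vector $X$ (equivalently, $\bar{\nabla}^2\Phi=\phi'(r)\bar g$); second, the Weingarten equation $\bar{\nabla}_{e_i}\nu=g^{kl}h_{li}e_k$ on $M$, where the sign convention is the one already fixed by Lemma \ref{Phi}.

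For the first identity I would apply the product rule for the ambient connection to $u=\langle V,\nu\rangle$:
\[
u_i=\langle \bar{\nabla}_{e_i} V,\nu\rangle+\langle V,\bar{\nabla}_{e_i}\nu\rangle.
\]
The conformal Killing property collapses the first term to $\phi'(r)\langle e_i,\nu\rangle=0$, while the Weingarten equation turns the second term into $g^{kl}h_{li}\langle V,e_k\rangle$. The proof closes once one observes that $\Phi_l=e_l\Phi=\langle\bar{\nabla}\Phi,e_l\rangle=\langle V,e_l\rangle$, giving $u_i=g^{kl}h_{ik}\Phi_l$.

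For the Hessian I would simply covariantly differentiate this expression on $M$. Using $\nabla g^{-1}=0$ produces two pieces, one involving $\nabla_j h_{ik}$ and one involving $\nabla_j\Phi_l=\Phi_{lj}$. The Codazzi equation in a space of constant sectional curvature forces $h_{ikj}$ to be fully symmetric (the usual curvature correction vanishes because it is proportional to $\langle e_i,\nu\rangle$), converting the first piece to $g^{kl}h_{ijk}\Phi_l$. For the second piece, Lemma \ref{Phi} provides $\Phi_{lj}=\phi'(r)g_{lj}-h_{lj}u$; contracting with $g^{kl}h_{ik}$ and using $g^{kl}g_{lj}=\delta^k_j$ and the symmetry $h_{lj}=h_{jl}$ produces exactly the two remaining terms $\phi'(r)h_{ij}$ and $-u\sum_{k,l}g^{kl}h_{ik}h_{jl}$.

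There is no genuine obstacle here; the only thing to keep careful track of is sign conventions. I would pin down at the outset the convention that makes Lemma \ref{Phi} come out as stated (namely $h_{ij}=\langle e_i,\bar{\nabla}_{e_j}\nu\rangle=-\langle\bar{\nabla}_{e_i}e_j,\nu\rangle$) and then use it uniformly, so that the Weingarten formula used in the $u_i$ step carries the correct sign and the first term of the Hessian assembles to $+\phi' h_{ij}$ rather than its negative. Once that bookkeeping is fixed, both identities follow from one application of the product rule together with the two structural inputs (the conformal Killing property of $V$ and Codazzi in constant curvature), plus Lemma \ref{Phi}.
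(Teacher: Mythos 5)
Your proof is correct: the paper states this lemma without proof (it is quoted as well known, with a reference to \cite{LuCAG}), and the standard derivation there is exactly your computation — differentiate $u=\langle V,\nu\rangle$ using $\bar\nabla_X V=\phi'X$ and the Weingarten equation, then differentiate $u_i=\sum_{k,l} g^{kl}h_{ik}\Phi_l$ once more, invoking Codazzi in constant curvature and Lemma \ref{Phi}. Your attention to fixing the sign convention $h_{ij}=-\langle\bar\nabla_{e_i}e_j,\nu\rangle$ so that it matches Lemma \ref{Phi} is precisely the right bookkeeping.
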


For a fixed local frame $(e_1,\cdots,e_n)$, the Gauss and Codazzi equations are given by
\begin{align*}
R_{ijkl}=-(\delta_{ik}\delta_{jl}-\delta_{il}\delta_{jk})+\left(h_{ik}h_{jl}-h_{il}h_{jk}\right),
\end{align*}
\begin{align*}
h_{ijk}=h_{ikj}.
\end{align*}
The convention that $R_{ijij}$ denotes the sectional curvature is used here.

The interchanging formula is given by
\begin{align}\label{comm}
h_{klij}=&\ h_{ijkl}-\sum_m h_{ml}(h_{im}h_{kj}-h_{ij}h_{mk})-\sum_m h_{mj}(h_{mi}h_{kl}-h_{il}h_{mk})\\ \nonumber
&\ -\sum_m h_{ml}(\delta_{ij}\delta_{km}-\delta_{ik}\delta_{jm}) -\sum_m h_{mj}(\delta_{il}\delta_{km}-\delta_{ik}\delta_{lm}).
\end{align}

\section{A concavity inequality}

In this section, we will prove the following concavity inequality for Hessian operator, which is the key to establish curvature estimates. The lemma is inspired by \cite{GRW} and \cite{Yang}.
\begin{lemm}\label{Concavity-Lemma}
Let $\lambda=(\lambda_1,\cdots,\lambda_n)\in \Gamma_k$ with $\lambda_1\geq\cdots\geq\lambda_n$ and let $1\leq l<k$. For any $\epsilon,\delta,\delta_0\in (0,1)$, there exists a constant $\delta^\prime>0$ depending only on $\epsilon,\delta,\delta_0,n,k$ and $l$ such that if $\lambda_l\geq \delta \lambda_1$ and $\lambda_{l+1}\leq \delta^\prime\lambda_1$, then we have
\begin{align*}
-\sum_{p\neq q}\frac{\sigma_k^{pp,qq}\xi_p\xi_q}{\sigma_k}+\frac{\left( \sum_i \sigma_k^{ii}\xi_i \right)^2}{\sigma_k^2}\geq (1-\epsilon)\frac{\xi_1^2}{\lambda_1^2}-\delta_0\sum_{i>l}\frac{\sigma_k^{ii}\xi_i^2}{\lambda_1 \sigma_k},
\end{align*}
where $\xi=(\xi_1,\cdots,\xi_n)$ is an arbitrary vector in $\mathbb{R}^n$.
\end{lemm}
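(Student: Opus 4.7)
The plan is to use Lemma \ref{GLL-Lemma} to reduce from $\sigma_k$ to $\sigma_l$, then expand $\log\sigma_l$ carefully in the gap regime, and finally absorb the residual into the allowed error term using the structure of $\sigma_k^{ii}/\sigma_k$ for $i>l$.

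First, rearranging Lemma \ref{GLL-Lemma} yields
\[-\sum_{p\neq q}\frac{\sigma_k^{pp,qq}\xi_p\xi_q}{\sigma_k} + \frac{\bigl(\sum_i\sigma_k^{ii}\xi_i\bigr)^2}{\sigma_k^2} \geq -\sum_{p\neq q}\frac{\sigma_l^{pp,qq}\xi_p\xi_q}{\sigma_l} + \frac{\bigl(\sum_i\sigma_l^{ii}\xi_i\bigr)^2}{\sigma_l^2}.\]
Since $\sigma_l^{pp,pp}=0$, the right-hand side above is precisely $-\nabla^2\log\sigma_l[\xi,\xi]$, a non-negative quadratic form by concavity of $\log\sigma_l$ on $\Gamma_l$. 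It therefore suffices to show that this quantity dominates $(1-\epsilon)\xi_1^2/\lambda_1^2 - \delta_0\sum_{i>l}\sigma_k^{ii}\xi_i^2/(\lambda_1\sigma_k)$.

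Next, partition the indices as $L=\{1,\ldots,l\}$ and $S=\{l+1,\ldots,n\}$ and expand each symmetric function as a polynomial in $\lambda_S$. Because $\lambda_i\geq\delta\lambda_1$ for $i\in L$ and $\lambda_j\leq\delta'\lambda_1$ for $j\in S$, the quantities $\sigma_l$, $\sigma_l^{ii}$, and $\sigma_l^{ii,jj}$ differ from their values at $\lambda_S=0$ by a relative error that tends to $0$ as $\delta'\to 0$. A direct computation of the principal part gives
\[-\nabla^2\log\sigma_l[\xi,\xi]\big|_{\lambda_S=0} = V + 2BA' + \tfrac{T^2+U}{2}B^2 + \tfrac{T^2-U}{2}\sum_{j\in S}\xi_j^2,\]
where $V=\sum_{i\in L}\xi_i^2/\lambda_i^2$, $A'=\sum_{i\in L}\xi_i/\lambda_i^2$, $B=\sum_{j\in S}\xi_j$, $T=\sum_{i\in L}\lambda_i^{-1}$, and $U=\sum_{i\in L}\lambda_i^{-2}$. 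The main term $\xi_1^2/\lambda_1^2$ is carried by $V\geq\xi_1^2/\lambda_1^2$; to extract $(1-\epsilon)$ of it I absorb the (possibly negative) cross term $2BA'$ via an AM--GM estimate of the form $|2BA'|\leq \eta B^2 + A'^2/\eta$. Using $|A'|^2\leq UV$ together with the gap bound $U\leq l/(\delta\lambda_1)^2$, a suitable choice of $\eta$ leaves a residual of the shape $-C(\epsilon,\delta,n,l)\lambda_1^{-2}\sum_{j>l}\xi_j^2$.

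To absorb this residual into $\delta_0\sum_{i>l}\sigma_k^{ii}\xi_i^2/(\lambda_1\sigma_k)$, I use the parallel decomposition $\sigma_k=\sum_{j\geq 0}\sigma_{l-j}(\lambda_L)\sigma_{k-l+j}(\lambda_S)$: the dominant term in $\sigma_k$ is $\sigma_l(\lambda_L)\sigma_{k-l}(\lambda_S)$, of order $(\delta')^{k-l}\lambda_1^k$, while for $i>l$, $\sigma_k^{ii}=\sigma_{k-1}(\lambda|i)$ has a dominant term of order $(\delta')^{k-1-l}\lambda_1^{k-1}$, yielding $\sigma_k^{ii}/(\lambda_1\sigma_k)\geq c(n,k,l,\delta)/(\delta'\lambda_1^2)$. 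Taking $\delta'$ small enough that $c\delta_0/\delta'\geq C(\epsilon,\delta,n,l)$ finishes the argument. The main obstacle I foresee is precisely this last step: establishing a lower bound on $\sigma_k^{ii}/\sigma_k$ for $i>l$ that is uniform in the gap regime against configurations with mixed-sign $\lambda_S$, where cancellations occur in $\sigma_k$ while $\Gamma_k$ still enforces $\sigma_k>0$. This requires careful combinatorial bookkeeping of the leading orders in $\delta'$, and is the heart of the argument.
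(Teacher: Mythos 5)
Your opening move coincides with the paper's: apply Lemma~\ref{GLL-Lemma} to drop from $\sigma_k$ to $\sigma_l$, and observe that the resulting quantity $-\sum_{p\neq q}\sigma_l^{pp,qq}\xi_p\xi_q/\sigma_l+(\sum_i\sigma_l^{ii}\xi_i)^2/\sigma_l^2$ is $-\nabla^2\log\sigma_l[\xi,\xi]\ge 0$. The paper expresses the same quantity as $\sigma_l^{-2}\bigl(\sum_i(\sigma_l^{ii}\xi_i)^2+\sum_{p\neq q}(\sigma_l^{pp}\sigma_l^{qq}-\sigma_l\sigma_l^{pp,qq})\xi_p\xi_q\bigr)$ and then estimates the off-diagonal block directly, using the sign fact $\sigma_l^{pp}\sigma_l^{qq}-\sigma_l\sigma_l^{pp,qq}=\sigma_{l-1}^2(\lambda|pq)-\sigma_l(\lambda|pq)\sigma_{l-2}(\lambda|pq)\ge 0$ from \cite{GRW}, together with the two-sided bounds $\sigma_l\ge\lambda_1\cdots\lambda_l$ and $\sigma_l(\lambda|p)\le C\lambda_1\cdots\lambda_{l+1}/\lambda_p$. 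Your alternative --- an explicit evaluation of $-\nabla^2\log\sigma_l$ at $\lambda_S=0$ followed by an AM--GM on the cross term $2A'B$ --- is a genuinely different computation and your closed-form for the principal part is correct. It can in principle be made rigorous, but you should justify why the perturbation is uniformly $O(\delta')$: the assumption only gives $\lambda_j\le\delta'\lambda_1$ for $j>l$, and the lower bound $\lambda_j\ge-C\delta'\lambda_1$ is not immediate. It does hold, because $(\lambda|1\cdots l)\in\Gamma_{k-l}$ and Lemma~\ref{Sigma_k-lemma}(1) then gives $|\lambda_j|\le\frac{n-k}{k-l}\lambda_{l+1}\le C\delta'\lambda_1$; without noting this, the ``relative error tends to $0$'' claim is unsubstantiated.

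The real gap, which you flag yourself, is the final absorption: showing $\sigma_k^{ii}\gtrsim\sigma_k/(\delta'\lambda_1)$ for $i>l$ so that the residual $-C(\epsilon,\delta)\lambda_1^{-2}\sum_{j>l}\xi_j^2$ is swallowed by $-\delta_0\sum_{i>l}\sigma_k^{ii}\xi_i^2/(\lambda_1\sigma_k)$. Your heuristic via the decomposition $\sigma_k=\sum_j\sigma_{l-j}(\lambda_L)\sigma_{k-l+j}(\lambda_S)$ predicts the right order but, as you anticipate, it is not clear how to control the ratio against degenerate or mixed-sign $\lambda_S$ by combinatorial bookkeeping alone. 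The paper's argument for this step is short and avoids the issue entirely: since $\sigma_k^{nn}\ge\cdots\ge\sigma_k^{11}$ it suffices to treat $i=l+1$; Lemma~\ref{Sigma_k-lemma}(3) gives $\sigma_k/\lambda_1\le C^{-1}\sigma_k^{11}$; then writing $\sigma_k^{11}=\lambda_{l+1}\sigma_{k-2}(\lambda|1,l+1)+\sigma_{k-1}(\lambda|1,l+1)$ and splitting on the sign of $\sigma_{k-1}(\lambda|1,l+1)$ shows in both cases $\sigma_k^{11}\le C\delta'\lambda_1\sigma_{k-2}(\lambda|1,l+1)$, which is $\le C\delta'\bigl(\sigma_k^{l+1,l+1}-\sigma_k^{11}\bigr)\le C\delta'\sigma_k^{l+1,l+1}$. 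Chaining these yields exactly the $1/\delta'$ gain. Without this (or an equally rigorous substitute), the proof is incomplete.
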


\begin{proof}
By Lemma \ref{GLL-Lemma}, we have
\begin{align}\label{Concavity-1}
&\ -\sum_{p\neq q}\frac{\sigma_k^{pp,qq}\xi_p\xi_q}{\sigma_k}+\frac{\left( \sum_i \sigma_k^{ii}\xi_i \right)^2}{\sigma_k^2}\\\nonumber
\geq&\  \frac{\left( \sum_i \sigma_l^{ii}\xi_i \right)^2}{\sigma_l^2}-\sum_{p\neq q}\frac{\sigma_l^{pp,qq}\xi_p\xi_q}{\sigma_l}\\\nonumber
=&\ \frac{1}{\sigma_l^2}\left(\sum_i \left(\sigma_l^{ii}\xi_i \right)^2+ \sum_{p\neq q}\left(\sigma_l^{pp}\sigma_l^{qq}-\sigma_l\sigma_l^{pp,qq}\right) \xi_p\xi_q\right).
\end{align}

We claim that
\begin{align}\label{Concavity-2}
\sum_{p\neq q}\left(\sigma_l^{pp}\sigma_l^{qq}-\sigma_l\sigma_l^{pp,qq}\right) \xi_p\xi_q\geq -\frac{\epsilon}{2} \sum_{i\leq l}\left(\sigma_l^{ii}\xi_i\right)^2-\frac{C}{\epsilon}\sum_{i> l}\left(\sigma_l^{ii}\xi_i\right)^2,
\end{align}
where $C$ is a constant depending only on $n,k$ and $l$.

For $l=1$, we have $\sigma_l^{ii}=1$ and $\sigma_l^{pp,qq}=0$, thus
\begin{align*}
&\ \sum_{p\neq q}\left(\sigma_l^{pp}\sigma_l^{qq}-\sigma_l\sigma_l^{pp,qq}\right) \xi_p\xi_q\\
=&\ \sum_{p\neq q}\xi_p\xi_q\geq  2\sum_{i\neq 1}\xi_1\xi_i-C\sum_{i\neq 1}\xi_i^2\\
\geq&\ -\frac{\epsilon}{2}\xi_1^2-\frac{C}{\epsilon}\sum_{i\neq 1}\xi_i^2.
\end{align*}
Thus the claim holds for $l=1$.

For $l\geq 2$, by (4.22) in \cite{GRW}, we have
\begin{align}\label{Concavity-2-0}
 \sigma_l^{pp}\sigma_l^{qq}-\sigma_l\sigma_l^{pp,qq}= \sigma_{l-1}^2(\lambda|pq)-\sigma_{l}(\lambda|pq)\sigma_{l-2}(\lambda|pq)\geq 0.
\end{align}

It follows that
\begin{align}\label{Concavity-2-1}
&\ \sum_{p\neq q;p,q\leq l}\left(\sigma_l^{pp}\sigma_l^{qq}-\sigma_l\sigma_l^{pp,qq}\right) \xi_p\xi_q\\\nonumber
\geq &\ -\sum_{p\neq q;p,q\leq l} \left( \sigma_{l-1}^2(\lambda|pq)-\sigma_{l}(\lambda|pq)\sigma_{l-2}(\lambda|pq)\right) |\xi_p\xi_q|.
\end{align}

By Lemma \ref{Sigma_k-lemma} and the assumption of the lemma, for $p,q\leq l$, we have
\begin{align}\label{Concavity-3}
\sigma_l(\lambda|p)\leq&\  C\frac{\lambda_1\cdots\lambda_{l+1}}{\lambda_p}\leq \frac{C\delta^\prime}{\delta}\lambda_1\cdots\lambda_l\leq \frac{C\delta^\prime}{\delta}\sigma_l,\\\nonumber
\sigma_{l-1}(\lambda|pq) \leq&\  C\frac{\lambda_1\cdots\lambda_{l+1}}{\lambda_p\lambda_q}\leq \frac{C\delta^\prime}{\delta}\frac{\lambda_1\cdots\lambda_l}{\lambda_p}\leq \frac{C\delta^\prime}{\delta} \frac{\sigma_l}{\lambda_p}.
\end{align}

Together with Lemma \ref{Sigma_k-Lemma-0}, we have
\begin{align}\label{Concavity-4}
\lambda_p\sigma_l^{pp}=\sigma_l-\sigma_l(\lambda|p)\geq \left(1-\frac{C\delta^\prime}{\delta}\right)\sigma_l\geq \frac{1}{2}\sigma_l,
\end{align}
by choosing $\delta^\prime$ sufficiently small.

Combining (\ref{Concavity-3}) and (\ref{Concavity-4}), we have
\begin{align*}
\sigma_{l-1}(\lambda|pq)\leq \frac{C\delta^\prime}{\delta} \frac{\sigma_l}{\lambda_p}\leq \frac{C\delta^\prime}{\delta} \sigma_l^{pp}.
\end{align*}

By symmetry, we have
\begin{align*}
\sigma_{l-1}(\kappa|pq)\leq \frac{C\delta^\prime}{\delta} \sigma_l^{qq}.
\end{align*}

Therefore, 
\begin{align}\label{Concavity-5}
\sigma_{l-1}^2(\lambda|pq)\leq C\left(\frac{\delta^\prime}{\delta} \right)^2\sigma_l^{pp}\sigma_l^{qq}.
\end{align}

Similarly, by Lemma \ref{Sigma_k-lemma} and the assumption of the lemma as well as (\ref{Concavity-4}), for $p,q\leq l$, we have
\begin{align}\label{Concavity-6}
\left|\sigma_{l}(\lambda|pq)\sigma_{l-2}(\lambda|pq)\right|\leq &\ C\frac{\lambda_1\cdots\lambda_{l+1} \cdot |\lambda_{l+2}| }{\lambda_p\lambda_q} \cdot \frac{\lambda_1\cdots\lambda_l}{\lambda_p\lambda_q}\\\nonumber
\leq&\  C\left(\frac{\delta^\prime}{\delta}\right)^2 \frac{\lambda_1\cdots\lambda_{l}}{\lambda_p} \cdot \frac{\lambda_1\cdots\lambda_l}{\lambda_q}\\\nonumber
\leq&\ C \left(\frac{\delta^\prime}{\delta}\right)^2 \frac{\sigma_l}{\lambda_p}\cdot\frac{\sigma_l}{\lambda_q} \\\nonumber
\leq &\   C \left(\frac{\delta^\prime}{\delta}\right)^2   \sigma_l^{pp}\sigma_l^{qq}.
\end{align}

Plugging (\ref{Concavity-5}) and (\ref{Concavity-6}) into (\ref{Concavity-2-1}), we have
\begin{align}\label{Concavity-7}
&\ \sum_{p\neq q;p,q\leq l}\left(\sigma_l^{pp}\sigma_l^{qq}-\sigma_l\sigma_l^{pp,qq}\right) \xi_p\xi_q\\\nonumber
\geq&\ -C \left(\frac{\delta^\prime}{\delta}\right)^2  \sum_{p\neq q;p,q\leq l} \sigma_l^{pp}\sigma_l^{qq}|\xi_p\xi_q|\\\nonumber
\geq&\ -\frac{\epsilon}{4}\sum_{i\leq l} \left(\sigma_l^{ii}\xi_i\right)^2,
\end{align}
by choosing $\delta^\prime$ sufficiently small.

By (\ref{Concavity-2-0}), we have
\begin{align}\label{Concavity-8}
&\ 2\sum_{p\leq l,q>l}\left(\sigma_l^{pp}\sigma_l^{qq}-\sigma_l\sigma_l^{pp,qq}\right) \xi_p\xi_q+\sum_{p\neq q;p,q>l}\left(\sigma_l^{pp}\sigma_l^{qq}-\sigma_l\sigma_l^{pp,qq}\right) \xi_p\xi_q\\\nonumber
\geq &\ -2\sum_{p\leq l,q>l}\left(\sigma_l^{pp}\sigma_l^{qq}-\sigma_l\sigma_l^{pp,qq}\right) |\xi_p\xi_q|-\sum_{p\neq q; p,q>l}\left(\sigma_l^{pp}\sigma_l^{qq}-\sigma_l\sigma_l^{pp,qq}\right) |\xi_p\xi_q|\\\nonumber
\geq &\ -2\sum_{p\leq l,q>l}\sigma_l^{pp}\sigma_l^{qq}|\xi_p\xi_q|-\sum_{p\neq q; p,q>l} \sigma_l^{pp}\sigma_l^{qq} |\xi_p\xi_q|\\\nonumber
\geq&\ -\frac{\epsilon}{4} \sum_{i\leq l}\left(\sigma_l^{ii}\xi_i\right)^2-\frac{C}{\epsilon}\sum_{i> l}\left(\sigma_l^{ii}\xi_i\right)^2.
\end{align}

The claim now follows by combining (\ref{Concavity-7}) and (\ref{Concavity-8}).

\medskip

By (\ref{Concavity-1}) and (\ref{Concavity-2}), we have
\begin{align*}
&\ -\sum_{p\neq q}\frac{\sigma_k^{pp,qq}\xi_q\xi_q}{\sigma_k}+\frac{\left(\sum_i \sigma_k^{ii}\xi_i\right) ^2}{\sigma_k^2}\\
\geq&\ \frac{1}{\sigma_l^2}\left(\sum_i \left(\sigma_l^{ii}\xi_i\right)^2-\frac{\epsilon}{2} \sum_{i\leq l}\left(\sigma_l^{ii}\xi_i\right)^2-\frac{C}{\epsilon}\sum_{i> l}\left(\sigma_l^{ii}\xi_i\right)^2\right)\\
\geq &\ \frac{1}{\sigma_l^2}\left(\left( 1-\frac{\epsilon}{2} \right)\sum_{i\leq l}\left(\sigma_l^{ii}\xi_i\right)^2-\frac{C}{\epsilon}\sum_{i> l}\left(\sigma_l^{ii}\xi_i\right)^2\right)\\
\geq &\ \frac{1}{\sigma_l^2}\left(\left( 1-\frac{\epsilon}{2} \right)\left(\sigma_l^{11}\xi_1\right)^2-\frac{C}{\epsilon}\sum_{i>l}\left(\sigma_l^{ii}\xi_i\right)^2\right).
\end{align*}

By (\ref{Concavity-4}), we have
\begin{align*}
\left( 1-\frac{\epsilon}{2} \right)\left(\frac{\sigma_l^{11}}{\sigma_l}\right)^2 \geq \left( 1-\frac{\epsilon}{2} \right)\left(\frac{1-\frac{C\delta^\prime}{\delta}}{\lambda_1}\right)^2\geq \frac{1-\epsilon}{\lambda_1^2},
\end{align*}
by choosing $\delta^\prime$ sufficiently small.

It follows that
\begin{align*}
-\sum_{p\neq q}\frac{\sigma_k^{pp,qq}\xi_q\xi_q}{\sigma_k}+\frac{\left(\sum_i \sigma_k^{ii}\xi_i\right) ^2}{\sigma_k^2}\geq   (1-\epsilon)\frac{\xi_1^2}{\lambda_1^2}-\frac{C}{\epsilon}\frac{1}{\sigma_l^2}\sum_{i> l}\left(\sigma_l^{ii}\xi_i\right)^2.
\end{align*}

By Lemma \ref{Sigma_k-lemma} and the assumption of the lemma, for $i>l$, we have
\begin{align*}
-\frac{C}{\epsilon}\left(\frac{\sigma_l^{ii}}{\sigma_l}\right)^2\geq -\frac{C}{\epsilon}\left(\frac{\lambda_1\cdots\lambda_{l-1}}{\lambda_1\cdots\lambda_l}\right)^2\geq -\frac{C}{\epsilon \delta^2}\frac{1}{\lambda_1^2}.
\end{align*}

Consequently,
\begin{align*}
-\sum_{p\neq q}\frac{\sigma_k^{pp,qq}\xi_q\xi_q}{\sigma_k}+\frac{\left(\sum_i \sigma_k^{ii}\xi_i\right) ^2}{\sigma_k^2}\geq   (1-\epsilon)\frac{\xi_1^2}{\lambda_1^2}-\frac{C}{\epsilon \delta^2}\sum_{i>l}\frac{\xi_i^2}{\lambda_1^2}.
\end{align*}

To prove the lemma, we only need to show for $i>l$, we have
\begin{align*}
-\frac{C}{\epsilon \delta^2} \frac{1}{\lambda_1^2}\geq -\delta_0\frac{\sigma_k^{ii}}{\lambda_1\sigma_k},
\end{align*}
i.e.
\begin{align*}
\sigma_k^{ii}\geq \frac{C}{\epsilon \delta^2\delta_0}\frac{\sigma_k}{\lambda_1}.
\end{align*}

Since $\sigma_l^{nn}\geq \cdots\geq \sigma_k^{11}$, we only need to prove the above inequality for $i=l+1$. From now on, we will fix $i=l+1$.

By Lemma \ref{Sigma_k-Lemma-0} and Lemma \ref{Sigma_k-lemma}, we have
\begin{align*}
\frac{C}{\epsilon \delta^2\delta_0}\frac{\sigma_k}{\lambda_1}\leq \frac{C}{\epsilon \delta^2\delta_0}\sigma_k^{11}=\frac{C}{\epsilon \delta^2\delta_0}\bigg(\lambda_i\sigma_{k-2}(\lambda|1i)+\sigma_{k-1}(\lambda|1i)\bigg).
\end{align*}

If $\sigma_{k-1}(\lambda|1i)\leq 0$, then by the assumption of the lemma, we have
\begin{align}\label{Concavity-9}
\frac{C}{\epsilon \delta^2\delta_0}\frac{\sigma_k}{\lambda_1}\leq \frac{C}{\epsilon \delta^2\delta_0} \lambda_i\sigma_{k-2}(\lambda|1i) \leq \frac{C\delta^\prime} {\epsilon \delta^2\delta_0} \lambda_1\sigma_{k-2}(\lambda|1i).
\end{align}

If $\sigma_{k-1}(\lambda|1i)> 0$, then $(\lambda|1i)\in \Gamma_{k-1}$. By Lemma \ref{Sigma_k-lemma}, we have
\begin{align*}
\sigma_{k-1}(\lambda|1i) \leq&\ C\frac{ \lambda_2\cdots\lambda_{k+1}}{\lambda_i}\\
\leq&\ C\frac{ \lambda_2\cdots\lambda_k}{\lambda_i}\cdot \delta^\prime\lambda_1\\
\leq &\ C\sigma_{k-2}(\lambda|1i)\cdot \delta^\prime\lambda_1.
\end{align*}

Together with the assumption of the lemma, we have
\begin{align}\label{Concavity-10}
\frac{C}{\epsilon \delta^2\delta_0}\frac{\sigma_k}{\lambda_1}\leq \frac{C}{\epsilon \delta^2\delta_0}\bigg( \lambda_i\sigma_{k-2}(\lambda|1i)+\sigma_{k-1}(\lambda|1i)\bigg)\leq\frac{C\delta^\prime}{\epsilon \delta^2\delta_0}\lambda_1\sigma_{k-2}(\lambda|1i).
\end{align}

Combining (\ref{Concavity-9}) and (\ref{Concavity-10}), together with Lemma \ref{Sigma_k-Lemma-0} and the assumption of the lemma, we have
\begin{align*}
\frac{C}{\epsilon \delta^2\delta_0}\frac{\sigma_k}{\lambda_1}\leq&\  \frac{C\delta^\prime}{\epsilon \delta^2\delta_0}\lambda_1\sigma_{k-2}(\lambda|1i)\\
\leq&\  \frac{C\delta^\prime}{\epsilon \delta^2\delta_0}\left(\lambda_1-\lambda_i\right)\sigma_{k-2}(\lambda|1i)\\
=&\ \frac{C\delta^\prime}{\epsilon \delta^2\delta_0}\left(\sigma_k^{ii}-\sigma_k^{11}\right)\\
\leq&\ \frac{C\delta^\prime}{\epsilon \delta^2\delta_0}\sigma_k^{ii}\leq\sigma_k^{ii},
\end{align*}
by choosing $\delta^\prime$ sufficiently small.

The lemma is now proved.

\end{proof}

\section{Curvature equation in general form}

In this section, we will prove Theorem \ref{Theorem-General}. It is a consequence of the following theorem.
\begin{theo}
Let $M$ be a semi-convex, strictly star-shaped hypersurface satisfying curvature equation (\ref{Equation-Sigma_k}) in $\mathbb{H}^{n+1}$ with $\kappa\in \Gamma_k$. Let $f\in C^2(\Gamma)$ be a positive function, where $\Gamma$ is an open neighbourhood of the unit normal bundle of $M$ in $\mathbb{H}^{n+1}\times \mathbb{S}^n$. Then we have
\begin{align*}
\max_{X\in M;1\leq i\leq n}|\kappa_i(X)|\leq C\left(1+\max_{X\in \partial M;1\leq i\leq n}|\kappa_i(X)|\right),
\end{align*}
where $C$ is a constant depending only on $n,k,\|M\|_{C^1}, \inf f$ and $\|f\|_{C^2}$.
\end{theo}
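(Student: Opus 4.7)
The plan is to localize the maximum of a test function of the form
\[
W = \log h_{11} + \varphi(u) + \psi(\Phi),
\]
where $\varphi$ and $\psi$ are auxiliary functions (e.g.\ $\varphi(u)=-A\log u$ and $\psi(\Phi)=B\Phi$) with constants $A$ and $B$ depending only on $\|M\|_{C^1}$ and $\inf f$ to be chosen later. At an interior maximum $X_0$ of $W$, I select a local orthonormal frame diagonalizing the second fundamental form at $X_0$ with $h_{11}=\kappa_1\geq\cdots\geq\kappa_n$. If $\kappa_1$ has multiplicity at $X_0$, I replace $\log h_{11}$ by a perturbed smooth function selecting one eigenvalue (in the viscosity sense), as advocated in the introduction. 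Combining the first-order condition $W_i(X_0)=0$ with the second-order inequality $\sum_i F^{ii}W_{ii}(X_0)\leq 0$, then expanding $(\log h_{11})_{ii}=h_{11ii}/h_{11}-h_{11i}^2/h_{11}^2$, using the interchanging formula (\ref{comm}) to convert $h_{11ii}$ into $h_{ii11}$ plus the curvature commutator (which contributes \emph{extra} negative terms in $\mathbb{H}^{n+1}$ beyond the Euclidean case), and differentiating the equation (\ref{Equation-Sigma_k}) twice to replace $\sum_i F^{ii}h_{ii11}$ with $-\sum_{p,q,r,s}F^{pq,rs}h_{pq1}h_{rs1}$ up to $\|f\|_{C^2}$-terms, produces a single master inequality.

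In the master inequality the decisive piece is
\[
\mathcal{T}:=-\sum_{p,q,r,s}\frac{F^{pq,rs}h_{pq1}h_{rs1}}{h_{11}\,f}-\sum_i F^{ii}\frac{h_{11i}^2}{h_{11}^2},
\]
together with a positive contribution of order $\sum_i F^{ii}$ from $\varphi,\psi$ arising via Lemmas \ref{Phi} and \ref{support function}, and a good term of order $\sum_i F^{ii}\kappa_i^2$ coming from $\Phi_{ii}=\phi' g_{ii}-\kappa_i u$. Substituting the critical-point identity into $\sum_i F^{ii}h_{11i}^2/h_{11}^2$ rearranges $\mathcal{T}$ so that the left-hand side of Lemma \ref{Concavity-Lemma} appears with $\xi_i=h_{ii1}$, up to manageable corrections. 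I then split into two cases through a parameter $\delta$. \emph{Case A:} there exists an index $l$ with $\kappa_l\geq\delta\kappa_1$ and $\kappa_{l+1}\leq \delta'\kappa_1$, where $\delta'$ is the threshold from Lemma \ref{Concavity-Lemma}. The concavity inequality then yields a gain $(1-\epsilon)h_{111}^2/h_{11}^2$ which cancels the $i=1$ summand of $\sum_i F^{ii}h_{11i}^2/h_{11}^2$, while the $\delta_0$-term is absorbed by Lemma \ref{Sigma_k-lemma}(4) (giving $\sum_i F^{ii}\kappa_i^2\geq\tfrac{k}{n}\sigma_1 f$) together with the $\psi''(\Phi)\Phi_i^2$ contribution; the remaining $1<i\leq l$ directional terms are controlled through the critical-point identity after $A$ and $B$ are chosen large. \emph{Case B:} no such $l$ exists. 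Then iterating the failure condition gives $\kappa_i\geq (\delta')^i\kappa_1$ down to a threshold, and the semi-convexity $\kappa_j\geq -K$ pins the remaining indices, so that $\sigma_k(\kappa)=f\leq \|f\|_{C^0}$ together with $\kappa\in\Gamma_k$ forces $\kappa_1\leq C$ by a direct counting argument on the elementary symmetric functions.

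The main obstacle will be the delicate coordination of the small parameters $\epsilon,\delta,\delta',\delta_0$ together with the constants $A,B$ in $W$ so that the concavity gain from Lemma \ref{Concavity-Lemma} strictly dominates, simultaneously, (i) the residual directional contributions $\sum_{1<i\leq l}F^{ii}h_{11i}^2/h_{11}^2$ not swallowed by the $(1-\epsilon)$ gain, (ii) the extra negative hyperbolic commutator terms generated by (\ref{comm}) that the Euclidean proof of \cite{GRW} did not have to face, and (iii) the cross terms $\varphi'(u)u_i$ and $\psi'(\Phi)\Phi_i$ that couple into the critical-point identity. A second, more subtle issue is the viscosity treatment of the case when $\kappa_1$ has multiplicity at $X_0$, which must be verified not to interfere with applying Lemma \ref{Concavity-Lemma}. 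Finally, in Case B the essential use of the semi-convexity hypothesis—converting the failure of a spectral gap between consecutive principal curvatures into a genuine bound on $\kappa_1$ via $\sigma_k(\kappa)=f$ and $\kappa\in\Gamma_k$—is the one step where admissibility alone does not suffice, so it must be carried out with care; this is exactly where the assumption $\kappa_i\geq -K$ is indispensable.
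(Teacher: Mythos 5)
Your proposal follows essentially the same route as the paper: the test function $\ln\kappa_1 - N\ln u + \alpha\Phi$, the viscosity treatment of the largest eigenvalue via the Brendle--Choi--Daskalopoulos lemma, the interchange formula and double differentiation of the equation, the application of Lemma \ref{Concavity-Lemma} with $\xi_i=h_{ii1}$ in the spectral-gap case, and the use of semi-convexity plus $\sigma_k(\kappa)=f$ to bound $\kappa_1$ directly when no gap exists. The only quibbles are cosmetic: the paper also invokes $\kappa_i\geq -K$ inside Case~1 (to guarantee $\kappa_1+\kappa_i\geq 0$ and to control $\sigma_k(\kappa|1)$, not merely in the no-gap case), and the residual $-\epsilon F^{11}h_{111}^2/\kappa_1^2$ after the near-cancellation is absorbed through a further dichotomy on the size of $\kappa_k$ with the specific choices $\alpha=N^3$, $\epsilon=1/N^2$, details you fold into ``choose $A,B$ large.''
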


\begin{proof}
Since $k=1,2$ was already solved, we only need to consider the case $k\geq 3$. Since $M$ is semi-convex, strictly star-shaped, without loss of generality, we may assume $\kappa_i\geq -K$ and $u>2a>0$. 

Consider the test function
\begin{align*}
Q=\ln \kappa_1-N\ln u +\alpha\Phi,
\end{align*}
where $\kappa_1$ is the largest principle curvature and $N,\alpha$ are large constants to be determined later.

In the following, we will do a standard but slightly lengthy computation at the maximum point. First time readers can start from (\ref{General-9}).

Assume $Q$ achieves maximum at an interior point $X_0$. At $X_0$, we can choose an orthonormal frame such that $(h_{ij})$ is diagonalized. Without loss of generality, we may assume $\kappa_1$ has multiplicity $m$, i.e.
\begin{align*}
\kappa_1=\cdots=\kappa_m>\kappa_{m+1}\geq \cdots\geq \kappa_n.
\end{align*}

By Lemma 5 in \cite{BCD}, at $X_0$, we have
\begin{align}\label{BCD}
\delta_{kl}\cdot{\kappa_1}_i=h_{kli},\quad 1\leq k,l\leq m,
\end{align}
\begin{align*}
{\kappa_1}_{ii}\geq h_{11ii}+2\sum_{p>m}\frac{h_{1pi}^2}{\kappa_1 -\kappa_p},
\end{align*}
in the viscosity sense.

At $X_0$, we have
\begin{align}\label{General-critical}
0=\frac{{\kappa_1}_i}{\kappa_1}-N\frac{u_i}{u}+\alpha\Phi_i=\frac{h_{11i}}{\kappa_1}-N\frac{u_i}{u}+\alpha\Phi_i,
\end{align}
\begin{align}\label{General-1}
0\geq &\ \frac{{\kappa_1}_{ii}}{\kappa_1}-\frac{ (\kappa_1)_i^2}{\kappa_1^2}-N\frac{u_{ii}}{u}+N\frac{u_i^2}{u^2}+\alpha\Phi_{ii}\\\nonumber
\geq &\ \frac{h_{11ii}}{\kappa_1}+2\sum_{p>m}\frac{h_{1pi}^2}{\kappa_1(\kappa_1-\kappa_p)}-\frac{ h_{11i}^2}{\kappa_1^2}-N\frac{u_{ii}}{u}+\alpha\Phi_{ii},
\end{align}
in the viscosity sense.

By (\ref{comm}), we have
\begin{align*}
h_{11ii}=h_{ii11}+h_{11}^2h_{ii}-h_{ii}^2h_{11}-h_{11}+h_{ii}.
\end{align*}

Plugging into (\ref{General-1}), together with Lemma \ref{Phi}, we have
\begin{align*}
0\geq&\ \frac{h_{ii11}}{\kappa_1}+2\sum_{p>m}\frac{h_{1pi}^2}{\kappa_1(\kappa_1-\kappa_p)}-\frac{ h_{11i}^2}{\kappa_1^2}-N\frac{u_{ii}}{u}\\
&\ +\alpha \left(\phi^\prime-h_{ii}u\right)+\kappa_1\kappa_i-\kappa_i^2-1+\frac{\kappa_i}{\kappa_1}.
\end{align*}

Contracting with $F^{ii}=\sigma_k^{ii}$, together with Lemma \ref{Sigma_k-Lemma-0}, we have
\begin{align}\label{General-2}
0\geq &\ \sum_i\frac{F^{ii}h_{ii11}}{\kappa_1}+2\sum_i\sum_{p>m}\frac{F^{ii}h_{1pi}^2}{\kappa_1(\kappa_1-\kappa_p)}-\sum_i\frac{ F^{ii}h_{11i}^2}{\kappa_1^2}-N\sum_i\frac{F^{ii}u_{ii}}{u}\\\nonumber
&\ +\alpha \phi^\prime\sum_i F^{ii}-\alpha kFu+kF\kappa_1-\sum_iF^{ii}\kappa_i^2-\sum_i F^{ii}+\frac{kF}{\kappa_1}\\\nonumber
\geq &\ \sum_i\frac{F^{ii}h_{ii11}}{\kappa_1}+2\sum_i\sum_{p>m}\frac{F^{ii}h_{1pi}^2}{\kappa_1(\kappa_1-\kappa_p)}-\sum_i\frac{ F^{ii}h_{11i}^2}{\kappa_1^2}-N\sum_i\frac{F^{ii}u_{ii}}{u}\\\nonumber
&\ +\left(\alpha \phi^\prime-1\right) \sum_iF^{ii}-\sum_iF^{ii}\kappa_i^2-C\alpha,
\end{align}
where $C$ is a universal constant depending only on $n,k,\|M\|_{C^1}, \inf f$ and $\|f\|_{C^2}$. From now on, we will use $C$ to denote a universal constant depending only on $n,k,\|M\|_{C^1}, \inf f$ and $\|f\|_{C^2}$, it may change from line to line.

By Lemma \ref{support function}, we have
\begin{align*}
u_{ii}=\sum_kh_{iik}\Phi_k+\phi^\prime h_{ii}-h_{ii}^2u.
\end{align*}

Together with Lemma \ref{Sigma_k-Lemma-0}, we have
\begin{align*}
-N\sum_i\frac{F^{ii}u_{ii}}{u}=&\ -N\sum_k\frac{F_k\Phi_k}{u}-N\frac{\phi^\prime kF}{u}+ N\sum_iF^{ii}\kappa_i^2\\
\geq &\ -N\sum_k\frac{F_k\Phi_k}{u} + N\sum_iF^{ii}\kappa_i^2-CN.
\end{align*}

Plugging into (\ref{General-2}), we have
\begin{align}\label{General-3}
0\geq&\ \sum_i\frac{F^{ii}h_{ii11}}{\kappa_1}+2\sum_i\sum_{p>m}\frac{F^{ii}h_{1pi}^2}{\kappa_1(\kappa_1-\kappa_p)}-\sum_i\frac{ F^{ii}h_{11i}^2}{\kappa_1^2}-N\sum_k\frac{F_k\Phi_k}{u}\\\nonumber
&\ +\left(\alpha \phi^\prime-1\right) \sum_iF^{ii}+(N-1)\sum_iF^{ii}\kappa_i^2-C\alpha-CN.
\end{align}

Differentiating equation (\ref{Equation-Sigma_k}) twice, we have
\begin{align*}
&\ \sum_iF^{ii}h_{ii11}+\sum_{p,q,r,s}F^{pq,rs}h_{pq1}h_{rs1}=f_{11}\\
=&\ \left(\sum_kh_{k1}(d_\nu f)(e_k)+ (d_Xf)(X_1)\right)_1 \\
=&\ \sum_kh_{k11}(d_\nu f)(e_k)+h_{11}^2(d_{\nu\nu} f)(e_1,e_1)-h_{11}^2(d_\nu f)(\nu)\\
&\ +(d_{XX}f)(X_1,X_1)-h_{11}(d_Xf)(\nu)\\
\geq &\ \sum_k h_{k11}(d_\nu f)(e_k)-C\kappa_1^2-C\kappa_1-C.
\end{align*}

Plugging into (\ref{General-3}), we have
\begin{align}\label{General-4}
0\geq&\ -\sum_{p,q,r,s}\frac{F^{pq,rs}h_{pq1}h_{rs1}}{\kappa_1}+2\sum_i\sum_{p>m}\frac{F^{ii}h_{1pi}^2}{\kappa_1(\kappa_1-\kappa_p)}-\sum_i\frac{ F^{ii}h_{11i}^2}{\kappa_1^2}\\\nonumber
&\ -N\sum_k\frac{F_k\Phi_k}{u}+\left(\alpha \phi^\prime-1\right) \sum_iF^{ii}+(N-1)\sum_iF^{ii}\kappa_i^2\\\nonumber
&\ +\sum_k\frac{h_{11k}}{\kappa_1}(d_\nu f)(e_k)-C\kappa_1-C\alpha-CN.
\end{align}

By Lemma \ref{support function} and the critical equation (\ref{General-critical}), we have
\begin{align*}
&\ -N\sum_k\frac{F_k\Phi_k}{u}+\sum_k\frac{h_{11k}}{\kappa_1}(d_\nu f)(e_k)\\
=&\ -N\sum_k\bigg(h_{kk}(d_\nu f)(e_k)+ (d_Xf)(X_k)\bigg)\frac{\Phi_k}{u}+\sum_k\left( N\frac{u_k}{u}-\alpha\Phi_k\right) (d_\nu f)(e_k)\\
\geq &\ -N \sum_kh_{kk}(d_\nu f)(e_k)\frac{\Phi_k}{u}+N\sum_k\frac{h_{kk}\Phi_k}{u}(d_\nu f)(e_k)-CN-C\alpha\\
= &\ -CN-C\alpha.
\end{align*}

Plugging into (\ref{General-4}), we have
\begin{align*}
0\geq&\ -\sum_{p,q,r,s}\frac{F^{pq,rs}h_{pq1}h_{rs1}}{\kappa_1}+2\sum_i\sum_{p>m}\frac{F^{ii}h_{1pi}^2}{\kappa_1(\kappa_1-\kappa_p)}-\sum_i\frac{ F^{ii}h_{11i}^2}{\kappa_1^2}\\\nonumber
&\ +\left(\alpha \phi^\prime-1\right) \sum_iF^{ii}+(N-1)\sum_iF^{ii}\kappa_i^2-C\kappa_1-C\alpha-CN.
\end{align*}

Now 
\begin{align*}
-\sum_{p,q,r,s} F^{pq,rs}h_{pq1}h_{rs1}=-\sum_{p\neq q}F^{pp,qq}h_{pp1}h_{qq1}+\sum_{p\neq q}F^{pp,qq}h_{pq1}^2.
\end{align*}

Thus
\begin{align}\label{General-5}
0\geq &\ -\sum_{p\neq q}\frac{F^{pp,qq}h_{pp1}h_{qq1}}{\kappa_1}+\sum_{p\neq q} \frac{F^{pp,qq}h_{pq1}^2}{\kappa_1}+2\sum_i\sum_{p>m}\frac{F^{ii}h_{1pi}^2}{\kappa_1(\kappa_1-\kappa_p)}\\\nonumber
&\ -\sum_i\frac{ F^{ii}h_{11i}^2}{\kappa_1^2}+\left(\alpha \phi^\prime-1\right) \sum_iF^{ii}+(N-1)\sum_iF^{ii}\kappa_i^2\\\nonumber
&\ -C\kappa_1-C\alpha-CN.
\end{align}

Let us concentrate on the third order terms. By Lemma \ref{Sigma_k-Lemma-0}, we have
\begin{align}\label{General-6}
\sum_{p\neq q} \frac{F^{pp,qq}h_{pq1}^2}{\kappa_1}\geq 2\sum_{i>m}\frac{F^{11,ii}h_{11i}^2}{\kappa_1}= 2\sum_{i>m}\frac{(F^{ii}-F^{11})h_{11i}^2}{\kappa_1(\kappa_1-\kappa_i)}.
\end{align}

On the other hand
\begin{align}\label{General-7}
2\sum_i\sum_{p>m}\frac{F^{ii}h_{1pi}^2}{\kappa_1(\kappa_1-\kappa_p)}\geq&\ 2\sum_{p>m}\frac{F^{pp}h_{1pp}^2}{\kappa_1(\kappa_1-\kappa_p)}+2\sum_{p>m}\frac{F^{11}h_{1p1}^2}{\kappa_1(\kappa_1-\kappa_p)}\\\nonumber
=&\ 2\sum_{i>m}\frac{F^{ii}h_{ii1}^2}{\kappa_1(\kappa_1-\kappa_i)}+2\sum_{i>m}\frac{F^{11}h_{11i}^2}{\kappa_1(\kappa_1-\kappa_i)}.
\end{align}

By (\ref{BCD}), we have
\begin{align}\label{General-8}
h_{11i}=h_{1i1}=\delta_{li}\cdot {\kappa_1}_1=0,\quad 1<i\leq m.
\end{align}

Plugging (\ref{General-6}), (\ref{General-7}) and (\ref{General-8}) into (\ref{General-5}), we have
\begin{align*}
0\geq &\ -\sum_{p\neq q}\frac{F^{pp,qq}h_{pp1}h_{qq1}}{\kappa_1}+ 2\sum_{i>m}\frac{F^{ii}h_{11i}^2}{\kappa_1(\kappa_1-\kappa_i)}+2\sum_{i>m}\frac{F^{ii}h_{ii1}^2}{\kappa_1(\kappa_1-\kappa_i)}-\sum_i\frac{ F^{ii}h_{11i}^2}{\kappa_1^2}\\
&\ +\left(\alpha \phi^\prime-1\right) \sum_iF^{ii}+(N-1)\sum_iF^{ii}\kappa_i^2-C\kappa_1-C\alpha-CN\\
=&\ -\sum_{p\neq q}\frac{F^{pp,qq}h_{pp1}h_{qq1}}{\kappa_1}+\sum_{i>m}\frac{F^{ii}(\kappa_1+\kappa_i)h_{11i}^2}{\kappa_1^2(\kappa_1-\kappa_i)}+2\sum_{i>m}\frac{F^{ii}h_{ii1}^2}{\kappa_1(\kappa_1-\kappa_i)}-\frac{ F^{11}h_{111}^2}{\kappa_1^2}\\
&\ +\left(\alpha \phi^\prime-1\right) \sum_iF^{ii}+(N-1)\sum_iF^{ii}\kappa_i^2-C\kappa_1-C\alpha-CN.
\end{align*}

Recall that $\kappa_i\geq -K$. Without loss of generality, we may assume $\kappa_1\geq K$. Thus $\kappa_1+\kappa_i\geq 0$. Therefore,
\begin{align}\label{General-9}
0\geq &\ -\sum_{p\neq q}\frac{F^{pp,qq}h_{pp1}h_{qq1}}{\kappa_1}+2\sum_{i>m}\frac{F^{ii}h_{ii1}^2}{\kappa_1(\kappa_1-\kappa_i)}-\frac{ F^{11}h_{111}^2}{\kappa_1^2}\\\nonumber
&\ +\left(\alpha \phi^\prime-1\right) \sum_iF^{ii}+(N-1)\sum_iF^{ii}\kappa_i^2-C\kappa_1-C\alpha-CN.
\end{align}

\medskip

We will now use Lemma \ref{Concavity-Lemma} to take care of the term $-\frac{ F^{11}h_{111}^2}{\kappa_1^2}$. 

{\bf Case 1:} There exists $m\leq l<k$ such that $\kappa_l\geq \delta \kappa_1$ and $\kappa_{l+1}\leq \delta^\prime \kappa_1$, where we have chosen $\delta_0=\frac{1}{2}$ and $\epsilon$ is a small constant to be determined later.

In this case, by Lemma \ref{Concavity-Lemma}, we have
\begin{align}\label{General-10}
-\sum_{p\neq q} F^{pp,qq}h_{pp1}h_{qq1}+\frac{\left( \sum_i F^{ii}h_{ii1}\right)^2}{\sigma_k}\geq (1-\epsilon)\frac{\sigma_k h_{111}^2}{\kappa_1^2}-\frac{1}{2}\sum_{i>l}\frac{F^{ii}h_{ii1}^2}{\kappa_1 },
\end{align}

Without loss of generality, we may assume $\kappa_1\geq K$. Since $\kappa_i\geq -K$, we have
\begin{align}\label{General-11}
\frac{2}{\kappa_1(\kappa_1-\kappa_i)}-\frac{1}{2\kappa_1^2}=\frac{3\kappa_1+\kappa_i}{2\kappa_1^2(\kappa_1-\kappa_i)}\geq 0.
\end{align}

Plugging (\ref{General-10}) and (\ref{General-11}) into (\ref{General-9}), together with Lemma \ref{Sigma_k-Lemma-0}, we have
\begin{align}\label{General-12}
0\geq &\ -\frac{F_1^2}{\kappa_1\sigma_k}+ (1-\epsilon)\frac{\sigma_k h_{111}^2}{\kappa_1^3}-\frac{ F^{11}h_{111}^2}{\kappa_1^2}+\left(\alpha \phi^\prime-1\right) \sum_iF^{ii}\\\nonumber
&\ +(N-1)\sum_iF^{ii}\kappa_i^2-C\kappa_1-C\alpha-CN\\\nonumber
\geq &\ (1-\epsilon)\sigma_k(\kappa|1) \frac{h_{111}^2}{\kappa_1^3}-\epsilon\frac{ F^{11}h_{111}^2}{\kappa_1^2}+\left(\alpha \phi^\prime-1\right) \sum_iF^{ii}\\\nonumber
&\ +(N-1)\sum_iF^{ii}\kappa_i^2-C\kappa_1-C\alpha-CN.
\end{align}

By the critical equation (\ref{General-critical}) and the fact that $\kappa_i\geq -K$, we have
\begin{align}\label{General-13}
(1-\epsilon)\sigma_k(\kappa|1) \frac{h_{111}^2}{\kappa_1^3}\geq&\  -C\kappa_2\cdots\kappa_k\cdot K\cdot \frac{1}{\kappa_1}\cdot\left(N\frac{u_1}{u}-\alpha\Phi_1\right)^2\\\nonumber
\geq&\ -CN^2 \kappa_1\cdots\kappa_k-C\alpha^2 \frac{\kappa_2\cdots\kappa_k}{\kappa_1}\\\nonumber
\geq &\ -CN^2 \kappa_1\cdots\kappa_k,
\end{align}
by assuming $\kappa_1$ sufficiently large.

Similarly,
\begin{align}\label{General-14}
-\epsilon\frac{F^{11}h_{111}^2}{\kappa_1^2}=&\ -\epsilon F^{11}  \left( N\frac{u_1}{u}-\alpha\Phi_1\right)^2\\\nonumber
\geq&\ -C\epsilon N^2 F^{11}\kappa_1^2-C\epsilon \alpha^2 F^{11}\\\nonumber
\geq&\ -C\epsilon N^2 F^{11}\kappa_1^2,
\end{align}
by assuming $\kappa_1$ sufficiently large.

By Lemma \ref{Sigma_k-Lemma-0} and Lemma \ref{Sigma_k-lemma} and the fact that $\phi^\prime(r)=\cosh(r)\geq 1$, we have
\begin{align}\label{General-15}
\left(\alpha \phi^\prime-1\right) \sum_iF^{ii}=\left(\alpha \phi^\prime-1\right) (n-k)\sigma_{k-1}\geq C\alpha \kappa_1\cdots\kappa_{k-1},
\end{align}
by choosing $\alpha$ sufficiently large.

Combining (\ref{General-13}) and (\ref{General-15}), we have
\begin{align}\label{General-16}
(1-\epsilon)\sigma_k(\kappa|1) \frac{h_{111}^2}{\kappa_1^3}+\left(\alpha \phi^\prime-1\right) \sum_iF^{ii}\geq \left(C\alpha-CN^2\kappa_k\right)\kappa_1\cdots\kappa_{k-1}.
\end{align}

From now on we will fix $\alpha=N^3$.

\medskip

If $\kappa_k\geq \frac{C\alpha}{N^2}=CN$, then we have
\begin{align*}
\sigma_k\geq&\  \kappa_1\cdots\kappa_k-C\kappa_1\cdots\kappa_{k-1}\cdot K\\
\geq &\ \kappa_1\cdots\kappa_{k-1}\left(CN-C\right)\\
\geq &\ C\kappa_1,
\end{align*}
by choosing $N$ sufficiently large.

It follows that $\kappa_1\leq C$.

\medskip

If $\kappa_k\leq \frac{C\alpha}{N^2}=CN$, choosing $\epsilon=\frac{1}{N^2}$, then by (\ref{General-12}), (\ref{General-14}), (\ref{General-16}) as well as Lemma \ref{Sigma_k-lemma}, we have
\begin{align*}
0\geq &\ -C\epsilon N^2 F^{11}\kappa_1^2+(N-1)\sum_iF^{ii}\kappa_i^2-C\kappa_1-C\alpha-CN\\
\geq &\ (N-C)\sum_iF^{ii}\kappa_i^2-C\kappa_1-C\alpha-CN\\
\geq &\ CN\sigma_1\sigma_k-C\kappa_1-C\alpha-CN\\
\geq&\  (CN-C)\kappa_1-C\alpha-CN\\
\geq &\ C\kappa_1-C\alpha-CN,
\end{align*}
by choosing $N$ sufficiently large.

It follows that $\kappa_1\leq C$.

\medskip

{\bf Case 2:} For each $2\leq l\leq k$, there exists $\delta_l>0$ such that $\kappa_l\geq \delta_l\kappa_1$.

In this case, we have
\begin{align*}
\sigma_k\geq&\ \kappa_1\cdots\kappa_k-C\kappa_1\cdots\kappa_{k-1}\cdot K\\
\geq&\ \kappa_1\cdots\kappa_{k-1}\left(\delta_k\kappa_1-C\right)\\
\geq&\  C\delta_2\cdots\delta_k\kappa_1^k,
\end{align*}
by assuming $\kappa_1$ sufficiently large.

It follows that $\kappa_1\leq C$.

The theorem is now proved.

\end{proof}

\section{Prescribed curvature measure type problem}

In this section, we will prove Theorem \ref{Theorem-Curvature-Measure}. It is a consequence of the following theorem.
\begin{theo}
Let $M$ be a strictly star-shaped hypersurface satisfying curvature equation (\ref{Equation-Curvature-Measure}) in $\mathbb{H}^{n+1}$ with $\kappa\in \Gamma_k$. Let $p\in (-\infty,0)\cup (0,1] $ and let $\varphi\in C^2(M)$ be a positive function. Then we have
\begin{align*}
\max_{X\in M;1\leq i\leq n}|\kappa_i(X)|\leq C\left(1+\max_{X\in \partial M;1\leq i\leq n}|\kappa_i(X)|\right),
\end{align*}
where $C$ is a constant depending only on $n,k,p,\|M\|_{C^1}, \inf \varphi$ and $\|\varphi\|_{C^2}$.
\end{theo}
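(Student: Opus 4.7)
The plan is to mirror the architecture of the proof of Theorem 4.1, but with the semi-convexity bound replaced by the structural information inherent in $f(X,\nu) = u^p\varphi(X)$. I would use a test function of the same form
$$Q = \log\kappa_1 - N\log u + \alpha\Phi,$$
with large constants $N$ and $\alpha$ to be fixed depending on $p$ and the $C^2$ data; the logarithmic dependence on $u$ is especially convenient here because $\log f = p\log u + \log\varphi$ is a linear function of $\log u$, so the first derivative terms in $u$ coming from the equation and from the test function combine cleanly via the critical equation.

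At an interior maximum $X_0$, I would choose an orthonormal frame diagonalizing $(h_{ij})$ and handle the multiplicity of $\kappa_1$ via the Brendle--Choi--Daskalopoulos viscosity inequality \eqref{BCD}. The first and second derivative tests, combined with the interchange formula \eqref{comm}, Lemma \ref{Phi} and Lemma \ref{support function}, produce an inequality of essentially the form (4.9). The new input enters when we differentiate $\sigma_k(\kappa) = u^p\varphi$ twice: substituting Lemma \ref{support function} for $u_{11}$ yields an explicit diagonal quadratic term
$$-p\,u^{p-1}\varphi\cdot h_{11}^2\,u \;=\; -p\,\sigma_k\kappa_1,$$
plus a $p\,u^{p-1}\varphi\sum_k h_{11k}\Phi_k$ contribution that merges with the $-N\sum_k F_k\Phi_k/u$ term via the critical equation, plus genuinely lower order cross terms bounded by $|u_1|\leq C\kappa_1$.

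I would then follow the Case 1 / Case 2 split of Section 4. In Case 2, where all $\kappa_l$ with $l\leq k$ are comparable to $\kappa_1$, the bound $\sigma_k\geq c\,\kappa_1^k$ combined with the boundedness $\sigma_k = u^p\varphi\leq C$ (using $u>2a>0$ together with the $C^1$ bound on $u$) instantly forces $\kappa_1\leq C$, regardless of the sign of $p$. In Case 1, the concavity inequality Lemma \ref{Concavity-Lemma} absorbs $-F^{11}h_{111}^2/\kappa_1^2$ into the good third order sum $2\sum_{i>m}F^{ii}h_{ii1}^2/(\kappa_1(\kappa_1-\kappa_i))$, and the zeroth order closure then comes from two sources: the coercive $-p\sigma_k\kappa_1$ contribution when $p<0$, and the standard inequality $(N-1)\sum_i F^{ii}\kappa_i^2\geq \tfrac{k}{n}N\sigma_1\sigma_k$ from Lemma \ref{Sigma_k-lemma}(4) when $0<p\leq 1$.

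The main obstacle will be the absence of the explicit lower bound $\kappa_i\geq -K$, which in Section 4 was repeatedly used to turn the signs of $\kappa_1+\kappa_i$ or $3\kappa_1+\kappa_i$ in our favour (as in (4.11)) and to bound $\sigma_k(\kappa|1)$ from above in the critical-equation step. Here the replacement is Lemma \ref{Sigma_k-lemma}(1), giving $-\kappa_i\leq \tfrac{n-k}{k}\kappa_1$ on negative curvatures; the price is a $k$-dependent constant that must be absorbed, and the most delicate point is whether the new term $-p\sigma_k\kappa_1$ (when $p<0$) and the Maclaurin-type term $\sum_i F^{ii}\kappa_i^2$ (when $0<p\leq 1$) have the correct size to close the estimate uniformly across $p\in(-\infty,0)\cup(0,1]$. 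The exclusion of $p=0$ is natural here since then no new diagonal quadratic is produced, and $p>1$ is excluded because the analogue of the $-p\sigma_k\kappa_1$ term would have the wrong sign to combine with the standard coercive terms. For $p=1$ the argument should specialize to Yang's prescribed curvature measure estimate.
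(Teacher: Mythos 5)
Your high-level plan — Brendle--Choi--Daskalopoulos viscosity inequality, interchange formula, split on whether Lemma~\ref{Concavity-Lemma} applies — is in the right spirit, but both the test function and the case analysis differ from the paper's proof in ways that are not cosmetic, and the fixes you sketch do not close.

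First, the paper uses $Q=\ln\kappa_1-\ln(u-a)+\alpha\Phi$, not $\ln\kappa_1-N\ln u+\alpha\Phi$. This matters. With $\ln(u-a)$, the critical equation gives $h_{111}/\kappa_1=\tfrac{u_1}{u-a}-\alpha\Phi_1$, and the second-derivative test retains $+\sum_i\tfrac{F^{ii}u_i^2}{(u-a)^2}$, which \emph{exactly} cancels the $\tfrac{u_1^2}{(u-a)^2}$ piece of the critical-equation square (see (\ref{Curvature-10}), (\ref{Curvature-12})), leaving only $O(\alpha\kappa_1)$ cross terms that the extra good term $\tfrac{a}{u-a}\sum_iF^{ii}\kappa_i^2$ absorbs. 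With $N\ln u$, expanding $(N\tfrac{u_1}{u}-\alpha\Phi_1)^2$ against $+N\tfrac{u_1^2}{u^2}$ produces $-(N^2-N)\tfrac{F^{ii}u_i^2}{u^2}\sim-N^2F^{ii}\kappa_i^2\Phi_i^2/u^2$, a bad term of the same order as the good $NF^{ii}\kappa_i^2$ term; so $N$ is forced to stay $O(1)$, and the ``choose $N$ large'' strategy you are importing from Section~4 is not available.

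Second, the paper's first dichotomy (Case~1 vs.\ Case~2 in Section~5) is not the spectral-gap dichotomy of Section~4; it is on whether some $\kappa_i$ satisfies $\kappa_1+\kappa_i\le 0$. That case is precisely where semi-convexity was used in Section~4, and it is handled here \emph{without} Lemma~\ref{Concavity-Lemma}: the positive terms $\tfrac{a}{n(u-a)}F^{ii}\kappa_i^2$ and $\tfrac{F^{ii}u_i^2}{(u-a)^2}$ absorb the corresponding bad third-order term (\ref{Curvature-12}), and then concavity of $\sigma_k^{1/k}$ (\ref{Curvature-13-1}) together with $F^{nn}\kappa_n^2\gtrsim\kappa_1^{k+1}$ closes. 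Replacing $\kappa_i\ge-K$ by $-\kappa_i\le\tfrac{n-k}{k}\kappa_1$, as you propose, makes $K$ a multiple of $\kappa_1$; the bound in (\ref{General-13}) degrades from $-CN^2\kappa_1\cdots\kappa_k$ to $-CN^2\kappa_1^2\cdots\kappa_k$, an extra power of $\kappa_1$ that $C\alpha\kappa_1\cdots\kappa_{k-1}$ cannot dominate, so the Section~4 argument does not transfer.

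Third, even in the favourable Case~2 the paper adds a further dichotomy on whether $\kappa_2\cdots\kappa_{k-1}\gtrless\alpha^{-(k-2)/n}$. Only when this product is small does one obtain $\kappa_1\sigma_k^{11}\approx\sigma_k$ (\ref{Curvature-16}), which is essential for matching $(1-\epsilon)\tfrac{\sigma_kh_{111}^2}{\kappa_1^3}$ from Lemma~\ref{Concavity-Lemma} against $-\tfrac{F^{11}h_{111}^2}{\kappa_1^2}$; without it the concavity step leaves an unabsorbed $-\epsilon\tfrac{F^{11}h_{111}^2}{\kappa_1^2}$-type term. Also, the choice $\delta_0=\min\{\tfrac{2k}{n},\tfrac12\}$ is precisely what makes (\ref{Curvature-18}) hold via Lemma~\ref{Sigma_k-lemma}(1) in the absence of a bounded $K$ — this is the rigorous form of the heuristic you gesture at. Finally, the coercive zeroth-order term is $(k-p)F\kappa_1$ (both the $kF\kappa_1$ from the interchange formula and the $-pF\kappa_1$ from $u_{11}$), positive for all $p\le 1<k$; the restriction $p\le 1$ enters not there but in (\ref{Curvature-20}) via the elementary inequality $\tfrac{p}{u^2}\le\tfrac{1}{(u-a)^2}$ needed so that the concavity term absorbs the $p(p-1)$ and $F_1^2$ quadratics.
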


\begin{proof}
Since $k=1,2$ was already solved, we only need to consider the case $k\geq 3$. Since $M$ is strictly star-shaped, without loss of generality, we may assume $u>2a>0$. 

Consider the test function
\begin{align*}
Q=\ln \kappa_1-\ln (u-a) +\alpha\Phi,
\end{align*}
where $\kappa_1$ is the largest principle curvature and $\alpha$ is a large constant to be determined later.

In the following, we will do a standard but slightly lengthy computation at the maximum point. First time readers can start from (\ref{Curvature-11}).

Assume $Q$ achieves maximum at an interior point $X_0$. At $X_0$, we can choose an orthonormal frame such that $(h_{ij})$ is diagonalized. Without loss of generality, we may assume $\kappa_1$ has multiplicity $m$, i.e.
\begin{align*}
\kappa_1=\cdots=\kappa_m>\kappa_{m+1}\geq \cdots\geq \kappa_n.
\end{align*}

By Lemma 5 in \cite{BCD}, at $X_0$, we have
\begin{align}\label{BCD-1}
\delta_{kl}\cdot{\kappa_1}_i=h_{kli},\quad 1\leq k,l\leq m,
\end{align}
\begin{align*}
{\kappa_1}_{ii}\geq h_{11ii}+2\sum_{p>m}\frac{h_{1pi}^2}{\kappa_1 -\kappa_p},
\end{align*}
in the viscosity sense.

At $X_0$, we have
\begin{align}\label{Curvature-critical}
0=\frac{{\kappa_1}_i}{\kappa_1}-\frac{u_i}{u-a}+\alpha\Phi_i=\frac{h_{11i}}{\kappa_1}-\frac{u_i}{u-a}+\alpha\Phi_i,
\end{align}
\begin{align}\label{Curvature-1}
0\geq &\ \frac{{\kappa_1}_{ii}}{\kappa_1}-\frac{ (\kappa_1)_i^2}{\kappa_1^2}-\frac{u_{ii}}{u-a}+\frac{u_i^2}{(u-a)^2}+\alpha\Phi_{ii}\\\nonumber
\geq &\ \frac{h_{11ii}}{\kappa_1}+2\sum_{p>m}\frac{h_{1pi}^2}{\kappa_1(\kappa_1-\kappa_p)}-\frac{ h_{11i}^2}{\kappa_1^2}-\frac{u_{ii}}{u-a}+\frac{u_i^2}{(u-a)^2}+\alpha\Phi_{ii},
\end{align}
in the viscosity sense.

By (\ref{comm}), we have
\begin{align*}
h_{11ii}=h_{ii11}+h_{11}^2h_{ii}-h_{ii}^2h_{11}-h_{11}+h_{ii}.
\end{align*}

Plugging into (\ref{Curvature-1}), together with Lemma \ref{Phi}, we have
\begin{align*}
0\geq&\ \frac{h_{ii11}}{\kappa_1}+2\sum_{p>m}\frac{h_{1pi}^2}{\kappa_1(\kappa_1-\kappa_p)}-\frac{ h_{11i}^2}{\kappa_1^2}-\frac{u_{ii}}{u-a}+\frac{u_i^2}{(u-a)^2}\\
&\ +\alpha \left(\phi^\prime-h_{ii}u\right)+\kappa_1\kappa_i-\kappa_i^2-1+\frac{\kappa_i}{\kappa_1}.
\end{align*}

Contracting with $F^{ii}=\sigma_k^{ii}$, together with Lemma \ref{Sigma_k-Lemma-0}, we have
\begin{align}\label{Curvature-2}
0\geq &\ \sum_i\frac{F^{ii}h_{ii11}}{\kappa_1}+2\sum_i\sum_{p>m}\frac{F^{ii}h_{1pi}^2}{\kappa_1(\kappa_1-\kappa_p)}-\sum_i\frac{ F^{ii}h_{11i}^2}{\kappa_1^2}-\sum_i\frac{F^{ii}u_{ii}}{u-a}\\\nonumber
&\ +\sum_i\frac{F^{ii}u_i^2}{(u-a)^2}+\alpha \phi^\prime\sum_i F^{ii}-\alpha kFu+kF\kappa_1-\sum_iF^{ii}\kappa_i^2-\sum_i F^{ii}+\frac{kF}{\kappa_1}\\\nonumber
\geq &\ \sum_i\frac{F^{ii}h_{ii11}}{\kappa_1}+2\sum_i\sum_{p>m}\frac{F^{ii}h_{1pi}^2}{\kappa_1(\kappa_1-\kappa_p)}-\sum_i\frac{ F^{ii}h_{11i}^2}{\kappa_1^2}-\sum_i\frac{F^{ii}u_{ii}}{u-a}\\\nonumber
&\ +\sum_i\frac{F^{ii}u_i^2}{(u-a)^2}+\left(\alpha \phi^\prime-1\right) \sum_iF^{ii}+kF\kappa_1-\sum_iF^{ii}\kappa_i^2-C\alpha,
\end{align}
where $C$ is a universal constant depending only on $n,k,p,\|M\|_{C^1}, \inf \varphi$ and $\|\varphi\|_{C^2}$. From now on, we will use $C$ to denote a universal constant depending only on $n,k,p,\|M\|_{C^1}, \inf \varphi$ and $\|\varphi\|_{C^2}$, it may change from line to line.

By Lemma \ref{support function}, we have
\begin{align*}
u_{ii}=\sum_kh_{iik}\Phi_k+\phi^\prime h_{ii}-h_{ii}^2u.
\end{align*}

Together with Lemma \ref{Sigma_k-Lemma-0}, we have
\begin{align*}
-\sum_i\frac{F^{ii}u_{ii}}{u-a}=&\ -\sum_k\frac{F_k\Phi_k}{u-a}-\frac{\phi^\prime kF}{u-a}+ \frac{u}{u-a}\sum_iF^{ii}\kappa_i^2\\
\geq &\ -\sum_k\frac{F_k\Phi_k}{u-a} + \frac{u}{u-a}\sum_iF^{ii}\kappa_i^2-C.
\end{align*}

Plugging into (\ref{Curvature-2}), we have
\begin{align}\label{Curvature-3}
0\geq&\ \sum_i\frac{F^{ii}h_{ii11}}{\kappa_1}+2\sum_i\sum_{p>m}\frac{F^{ii}h_{1pi}^2}{\kappa_1(\kappa_1-\kappa_p)}-\sum_i\frac{ F^{ii}h_{11i}^2}{\kappa_1^2}-\sum_k\frac{F_k\Phi_k}{u-a}\\\nonumber
&\ +\sum_i\frac{F^{ii}u_i^2}{(u-a)^2}+\left(\alpha \phi^\prime-1\right) \sum_iF^{ii}+kF\kappa_1+ \frac{a}{u-a}\sum_iF^{ii}\kappa_i^2-C\alpha-C.
\end{align}

Differentiating equation (\ref{Equation-Curvature-Measure}) twice, together with Lemma \ref{support function}, we have
\begin{align*}
&\ \sum_i F^{ii}h_{ii11}+\sum_{p,q,r,s} F^{pq,rs}h_{pq1}h_{rs1}= (u^p\varphi)_{11}  \\
=&\ \varphi\bigg( pu^{p-1} u_{11}+ p(p-1)u^{p-2} u_1^2\bigg)+2\varphi_1\cdot pu^{p-1} u_1+u^p\cdot\varphi_{11}\\
=& \ \varphi \cdot pu^{p-1}\left(\sum_k h_{11k}\Phi_k+\phi^\prime h_{11}-h_{11}^2u\right)+\varphi\cdot p(p-1)u^{p-2}u_1^2-C\kappa_1-C\\
\geq &\ \frac{pF}{u}\sum_k h_{11k}\Phi_k-pF \kappa_1^2 + p(p-1)F \frac{ u_1^2}{u^2}-C\kappa_1-C.
\end{align*}

Plugging into (\ref{Curvature-3}), we have
\begin{align}\label{Curvature-4}
0\geq&\ -\sum_{p,q,r,s}\frac{F^{pq,rs}h_{pq1}h_{rs1}}{\kappa_1}+2\sum_i\sum_{p>m}\frac{F^{ii}h_{1pi}^2}{\kappa_1(\kappa_1-\kappa_p)}-\sum_i\frac{ F^{ii}h_{11i}^2}{\kappa_1^2}\\\nonumber
&\ -\sum_k\frac{F_k\Phi_k}{u-a}+\sum_i\frac{F^{ii}u_i^2}{(u-a)^2}+\left(\alpha \phi^\prime-1\right) \sum_iF^{ii}+ \frac{a}{u-a}\sum_iF^{ii}\kappa_i^2\\\nonumber
&\ +\frac{pF}{u}\sum_k\frac{h_{11k}}{\kappa_1}\Phi_k+(k-p)F \kappa_1 +p(p-1)\frac{F}{\kappa_1} \frac{ u_1^2}{u^2}-C\alpha-C.
\end{align}

By the critical equation (\ref{Curvature-critical}), we have
\begin{align*}
&\ -\sum_k\frac{F_k\Phi_k}{u-a}+\frac{pF}{u}\sum_k\frac{h_{11k}}{\kappa_1}\Phi_k\\
=&\ -\sum_k\bigg(\varphi\cdot pu^{p-1}u_k+u^p\cdot \varphi_k\bigg)\frac{\Phi_k}{u-a}+\frac{pF}{u}\sum_k\left( \frac{u_k}{u-a}-\alpha\Phi_k\right)\Phi_k\\
\geq &\ -\frac{pF}{u}\sum_k\frac{u_k\Phi_k}{u-a}+\frac{pF}{u}\sum_k\frac{u_k\Phi_k}{u-a}-C\alpha-C\\
= &\ -C\alpha-C.
\end{align*}

Plugging into (\ref{Curvature-4}), we have
\begin{align*}
0\geq&\ -\sum_{p,q,r,s}\frac{F^{pq,rs}h_{pq1}h_{rs1}}{\kappa_1}+2\sum_i\sum_{p>m}\frac{F^{ii}h_{1pi}^2}{\kappa_1(\kappa_1-\kappa_p)}-\sum_i\frac{ F^{ii}h_{11i}^2}{\kappa_1^2}\\\nonumber
&\ +\sum_i\frac{F^{ii}u_i^2}{(u-a)^2}+\left(\alpha \phi^\prime-1\right) \sum_iF^{ii}+ \frac{a}{u-a}\sum_iF^{ii}\kappa_i^2\\\nonumber
&\ +(k-p)F \kappa_1 +p(p-1)\frac{F}{\kappa_1} \frac{ u_1^2}{u^2}-C\alpha-C.
\end{align*}

Now 
\begin{align*}
-\sum_{p,q,r,s} F^{pq,rs}h_{pq1}h_{rs1}=-\sum_{p\neq q}F^{pp,qq}h_{pp1}h_{qq1}+\sum_{p\neq q}F^{pp,qq}h_{pq1}^2.
\end{align*}

Thus
\begin{align}\label{Curvature-5}
0\geq&\ -\sum_{p\neq q} \frac{F^{pp,qq}h_{pp1}h_{qq1}}{\kappa_1} +\sum_{p\neq q} \frac{F^{pp,qq}h_{pq1}^2}{\kappa_1} +2\sum_i\sum_{p>m}\frac{F^{ii}h_{1pi}^2}{\kappa_1(\kappa_1-\kappa_p)}\\\nonumber
&\ -\sum_i\frac{ F^{ii}h_{11i}^2}{\kappa_1^2}+\sum_i\frac{F^{ii}u_i^2}{(u-a)^2}+\left(\alpha \phi^\prime-1\right) \sum_iF^{ii}+ \frac{a}{u-a}\sum_iF^{ii}\kappa_i^2\\\nonumber
&\ +(k-p)F \kappa_1 +p(p-1)\frac{F}{\kappa_1} \frac{ u_1^2}{u^2}-C\alpha-C.
\end{align}

Let us concentrate on the third order terms. By Lemma \ref{Sigma_k-Lemma-0}, we have
\begin{align}\label{Curvature-6}
\sum_{p\neq q} \frac{F^{pp,qq}h_{pq1}^2}{\kappa_1}\geq 2\sum_{i>m}\frac{F^{11,ii}h_{11i}^2}{\kappa_1}= 2\sum_{i>m}\frac{(F^{ii}-F^{11})h_{11i}^2}{\kappa_1(\kappa_1-\kappa_i)}.
\end{align}

On the other hand
\begin{align}\label{Curvature-7}
2\sum_i\sum_{p>m}\frac{F^{ii}h_{1pi}^2}{\kappa_1(\kappa_1-\kappa_p)}\geq&\ 2\sum_{p>m}\frac{F^{pp}h_{1pp}^2}{\kappa_1(\kappa_1-\kappa_p)}+2\sum_{p>m}\frac{F^{11}h_{1p1}^2}{\kappa_1(\kappa_1-\kappa_p)}\\\nonumber
=&\ 2\sum_{i>m}\frac{F^{ii}h_{ii1}^2}{\kappa_1(\kappa_1-\kappa_i)}+2\sum_{i>m}\frac{F^{11}h_{11i}^2}{\kappa_1(\kappa_1-\kappa_i)}.
\end{align}

By (\ref{BCD-1}), we have
\begin{align}\label{Curvature-8}
h_{11i}=h_{1i1}=\delta_{li}\cdot {\kappa_1}_1=0,\quad 1<i\leq m.
\end{align}

Plugging (\ref{Curvature-6}), (\ref{Curvature-7}) and (\ref{Curvature-8}) into (\ref{Curvature-5}), we have
\begin{align}\label{Curvature-9}
0\geq &\ -\sum_{p\neq q}\frac{F^{pp,qq}h_{pp1}h_{qq1}}{\kappa_1}+ 2\sum_{i>m}\frac{F^{ii}h_{11i}^2}{\kappa_1(\kappa_1-\kappa_i)}+2\sum_{i>m}\frac{F^{ii}h_{ii1}^2}{\kappa_1(\kappa_1-\kappa_i)}\\\nonumber
&\ -\sum_i\frac{ F^{ii}h_{11i}^2}{\kappa_1^2}+\sum_i\frac{F^{ii}u_i^2}{(u-a)^2}+\left(\alpha \phi^\prime-1\right) \sum_iF^{ii}+ \frac{a}{u-a}\sum_iF^{ii}\kappa_i^2\\\nonumber
&\ +(k-p)F \kappa_1 +p(p-1)\frac{F}{\kappa_1}  \frac{ u_1^2}{u^2}-C\alpha-C\\\nonumber
=&\ -\sum_{p\neq q}\frac{F^{pp,qq}h_{pp1}h_{qq1}}{\kappa_1}+\sum_{i>m}\frac{F^{ii}(\kappa_1+\kappa_i)h_{11i}^2}{\kappa_1^2(\kappa_1-\kappa_i)}+2\sum_{i>m}\frac{F^{ii}h_{ii1}^2}{\kappa_1(\kappa_1-\kappa_i)}\\\nonumber
&\ -\frac{ F^{11}h_{111}^2}{\kappa_1^2}+\sum_i\frac{F^{ii}u_i^2}{(u-a)^2}+\left(\alpha \phi^\prime-1\right) \sum_iF^{ii}+ \frac{a}{u-a}\sum_iF^{ii}\kappa_i^2\\\nonumber
&\ +(k-p)F \kappa_1 +p(p-1)\frac{F}{\kappa_1}  \frac{ u_1^2}{u^2}-C\alpha-C.
\end{align}

By the critical equation (\ref{Curvature-critical}), we have
\begin{align}\label{Curvature-10}
&\ \frac{a}{n(u-a)}F^{11}\kappa_1^2-\frac{ F^{11}h_{111}^2}{\kappa_1^2}+\frac{F^{11}u_1^2}{(u-a)^2}\\\nonumber
=&\ \frac{a}{n(u-a)}F^{11}\kappa_1^2-F^{11}\left(\frac{u_1}{u-a}-\alpha \Phi_1\right)^2+\frac{F^{11}u_1^2}{(u-a)^2}\\\nonumber
=&\ \frac{a}{n(u-a)}F^{11}\kappa_1^2+2\alpha F^{11}\frac{u_1}{u-a} \Phi_1-\alpha^2F^{11} \Phi_1^2\\\nonumber
\geq&\ \frac{a}{n(u-a)}F^{11}\kappa_1^2-C\alpha F^{11}\kappa_1-C\alpha^2 F^{11}\geq 0,
\end{align}
by assuming $\kappa_1$ sufficiently large.

Plugging (\ref{Curvature-10}) into (\ref{Curvature-9}), we have
\begin{align}\label{Curvature-11}
0\geq &\ -\sum_{p\neq q}\frac{F^{pp,qq}h_{pp1}h_{qq1}}{\kappa_1}+\sum_{i>m}\frac{F^{ii}(\kappa_1+\kappa_i)h_{11i}^2}{\kappa_1^2(\kappa_1-\kappa_i)}+2\sum_{i>m}\frac{F^{ii}h_{ii1}^2}{\kappa_1(\kappa_1-\kappa_i)}\\\nonumber
&\ +\sum_{i>1}\frac{F^{ii}u_i^2}{(u-a)^2}+\left(\alpha \phi^\prime-1\right) \sum_iF^{ii}+ \frac{n-1}{n}\frac{a}{u-a}\sum_iF^{ii}\kappa_i^2\\\nonumber
&\ +(k-p)F \kappa_1 +p(p-1)\frac{F}{\kappa_1}  \frac{ u_1^2}{ u^2}-C\alpha-C.
\end{align}

\medskip

{\bf Case 1:} There exists $\kappa_i$ such that $\kappa_1+\kappa_i\leq 0$. 

In this case, for each $i$ with $\kappa_1+\kappa_i\leq 0$, by the critical equation (\ref{Curvature-critical}), we have
\begin{align}\label{Curvature-12}
&\ \frac{a}{n(u-a)}F^{ii}\kappa_i^2+\frac{F^{ii}(\kappa_1+\kappa_i)h_{11i}^2}{\kappa_1^2(\kappa_1-\kappa_i)}+\frac{F^{ii}u_i^2}{(u-a)^2}\\\nonumber
\geq&\ \frac{a}{n(u-a)}F^{ii}\kappa_i^2-\frac{F^{ii}h_{11i}^2}{\kappa_1^2}+\frac{F^{ii}u_i^2}{(u-a)^2}\\\nonumber
=&\ \frac{a}{n(u-a)}F^{ii}\kappa_i^2-F^{ii}\left(\frac{u_i}{u-a}-\alpha \Phi_i\right)^2+\frac{F^{ii}u_i^2}{(u-a)^2}\\\nonumber
=&\ \frac{a}{n(u-a)}F^{ii}\kappa_i^2+2\alpha F^{ii}\frac{u_i}{u-a} \Phi_i-\alpha^2F^{ii} \Phi_i^2\\\nonumber
\geq&\ \frac{a}{n(u-a)}F^{ii}\kappa_i^2 -C\alpha F^{ii}|\kappa_i|-C\alpha^2F^{ii}\geq 0,
\end{align}
by assuming $\kappa_1$ sufficiently large as well as the fact $|\kappa_i|\geq \kappa_1$.

Plugging (\ref{Curvature-12}) into (\ref{Curvature-11}), we have
\begin{align}\label{Curvature-13}
0\geq &\ -\sum_{p\neq q}\frac{F^{pp,qq}h_{pp1}h_{qq1}}{\kappa_1}+\left(\alpha \phi^\prime-1\right) \sum_iF^{ii}+ \frac{a}{n(u-a)}\sum_iF^{ii}\kappa_i^2\\\nonumber
&\ +p(p-1)\frac{F}{\kappa_1} \frac{ u_1^2}{u^2}-C\alpha-C.
\end{align}
We have used the fact that $k\geq 3$ and there are at most $n-k$ negative $\kappa_i$'s above.

By concavity of $\sigma_k^{\frac{1}{k}}$, we have
\begin{align*}
\frac{1}{k}\sigma_k^{\frac{1}{k}-1}\sum_{p\neq q} \sigma_k^{pp,qq}h_{pp1}h_{qq1}+\frac{1}{k}\left(\frac{1}{k}-1\right)\sigma_k^{\frac{1}{k}-2}\sum_{p,q}\sigma_k^{pp}h_{pp1}\sigma_k^{qq}h_{qq1}\leq 0,
\end{align*}
i.e.
\begin{align}\label{Curvature-13-1}
-\sum_{p\neq q} F^{pp,qq}h_{pp1}h_{qq1}\geq -\frac{k-1}{k}\frac{F^2_1}{F}\geq -C\kappa_1^2.
\end{align}

Plugging into (\ref{Curvature-13}) and choosing $\alpha\geq 1$, together with the fact that $\phi^\prime(r)=\cosh(r)\geq 1$, we have
\begin{align*}
0\geq \frac{a}{n(u-a)}\sum_i F^{ii}\kappa_i^2-C\kappa_1-C\alpha-C.
\end{align*}

By Lemma \ref{Sigma_k-lemma}, we know that $\kappa_k\geq C|\kappa_n|\geq C\kappa_1$. Therefore, 
\begin{align*}
F^{nn}\kappa_n^2\geq C\sigma_{k-1}\kappa_n^2\geq C\kappa_1\cdots \kappa_{k-1}\cdot \kappa_1^2\geq C\kappa_1^{k+1}.
\end{align*}

Consequently,
\begin{align*}
0\geq C\kappa_1^{k+1}-C\kappa_1-C\alpha-C.
\end{align*}

It follows that $\kappa_1 \leq C$.

\medskip

{\bf Case 2:} $\kappa_1+\kappa_i\geq 0$ for all $i$.

In this case, by (\ref{Curvature-11}), we have
\begin{align}\label{Curvature-14}
0\geq &\ -\sum_{p\neq q}\frac{F^{pp,qq}h_{pp1}h_{qq1}}{\kappa_1}+2\sum_{i>m}\frac{F^{ii}h_{ii1}^2}{\kappa_1(\kappa_1-\kappa_i)}\\\nonumber
&\ +\left(\alpha \phi^\prime-1\right) \sum_iF^{ii}+ \frac{n-1}{n}\frac{a}{u-a}\sum_iF^{ii}\kappa_i^2\\\nonumber
&\ +(k-p)F \kappa_1 +p(p-1)\frac{F}{\kappa_1}  \frac{ u_1^2}{u^2}-C\alpha-C.
\end{align}

By Lemma \ref{Sigma_k-Lemma-0} and Lemma \ref{Sigma_k-lemma} and the fact that $\phi^\prime(r)=\cosh(r)\geq 1$, we have
\begin{align}\label{Curvature-15}
\left(\alpha \phi^\prime-1\right) \sum_iF^{ii}=\left(\alpha \phi^\prime-1\right) (n-k)\sigma_{k-1}\geq C\alpha \kappa_1\cdots\kappa_{k-1},
\end{align}
by choosing $\alpha$ sufficiently large.

Suppose now
\begin{align*}
\kappa_2\cdots \kappa_{k-1}\geq \alpha^{-\frac{k-2}{n}}.
\end{align*}

Then by combining (\ref{Curvature-13-1}) and (\ref{Curvature-15}), we have
\begin{align*}
&\ -\sum_{p\neq q}\frac{F^{pp,qq}h_{pp1}h_{qq1}}{\kappa_1}+\left(\alpha \phi^\prime-1\right) \sum_iF^{ii}+p(p-1)\frac{F}{\kappa_1}  \frac{ u_1^2}{u^2}\\
\geq&\ -C\kappa_1+C\alpha^{\frac{n-k+2}{n}} \kappa_1\geq 0,
\end{align*}
by choosing $\alpha$ sufficiently large.

Plugging into (\ref{Curvature-14}), we have
\begin{align*}
0\geq & (k-p)F \kappa_1 -C\alpha-C.
\end{align*}

It follows that $\kappa_1\leq C$.

\medskip

In the following, we will assume
\begin{align}\label{Curvature-15-1}
\kappa_2\cdots \kappa_{k-1}\leq \alpha^{-\frac{k-2}{n}}.
\end{align}

Consequently,
\begin{align*}
|\sigma_k(\kappa|1)|\leq C\kappa_2 \cdots \kappa_k \cdot \kappa_k\leq C\alpha^{-\frac{k}{n}}.
\end{align*}

By Lemma \ref{Sigma_k-Lemma-0}, we have
\begin{align*}
\sigma_k=\kappa_1\sigma_k^{11}+\sigma_k(\kappa|1).
\end{align*}

It follows that
\begin{align}\label{Curvature-16}
\frac{1}{2}\sigma_k\leq \left(1-C\alpha^{-\frac{k}{n}}\right)\sigma_k\leq \kappa_1\sigma_k^{11}\leq \left(1+C\alpha^{-\frac{k}{n}}\right)\sigma_k\leq 2\sigma_k,
\end{align}
by choosing $\alpha$ sufficiently large.

\medskip

{\bf Case 2-1:} There exists $m\leq l<k$ such that $\kappa_l\geq \delta \kappa_1$ and $\kappa_{l+1}\leq \delta^\prime \kappa_1$, where we have chosen $\delta_0=\min\{\frac{2k}{n},\frac{1}{2}\}$ and $\epsilon$ is a small constant to be determined later.

In this case, by Lemma \ref{Concavity-Lemma}, we have
\begin{align}\label{Curvature-17}
-\sum_{p\neq q} F^{pp,qq}h_{pp1}h_{qq1}+\frac{\left( \sum_i F^{ii}h_{ii1}\right)^2}{\sigma_k}\geq (1-\epsilon)\frac{\sigma_k h_{111}^2}{\kappa_1^2}-\delta_0\sum_{i>l}\frac{F^{ii}h_{ii1}^2}{\kappa_1 },
\end{align}

By Lemma \ref{Sigma_k-lemma} and the choice of $\delta_0$, we have
\begin{align}\label{Curvature-18}
\frac{2}{\kappa_1(\kappa_1-\kappa_i)}-\delta_0\frac{1}{\kappa_1^2}=\frac{(2-\delta_0)\kappa_1+\delta_0\kappa_i}{\kappa_1^2(\kappa_1-\kappa_i)}\geq 0.
\end{align}

Plugging (\ref{Curvature-17}) and (\ref{Curvature-18}) into (\ref{Curvature-14}), we have
\begin{align}\label{Curvature-19}
0\geq &\ -\frac{F_1^2}{\kappa_1\sigma_k}+(1-\epsilon)\frac{\sigma_k h_{111}^2}{\kappa_1^3}+\left(\alpha \phi^\prime-1\right) \sum_iF^{ii}+ \frac{n-1}{n}\frac{a}{u-a}\sum_iF^{ii}\kappa_i^2\\\nonumber
&\ +(k-p)F \kappa_1 +p(p-1)\frac{F}{\kappa_1}  \frac{ u_1^2}{u^2}-C\alpha-C.
\end{align}

By the critical equation (\ref{Curvature-critical}) and (\ref{Curvature-16}), together with the fact that $p\leq 1$, we have
\begin{align}\label{Curvature-20}
&\ \frac{n-1}{n}\frac{a}{u-a} F^{11}\kappa_1^2 -\frac{F_1^2}{\kappa_1\sigma_k}+(1-\epsilon)\frac{\sigma_k h_{111}^2}{\kappa_1^3}+p(p-1)\frac{F}{\kappa_1}  \frac{ u_1^2}{u^2}\\\nonumber
=&\ \frac{n-1}{n}\frac{a}{u-a}F^{11}\kappa_1^2-\frac{\left(\varphi\cdot pu^{p-1}u_1+u^p\varphi_1\right)^2}{\kappa_1\sigma_k} +(1-\epsilon)\frac{F}{\kappa_1}\left(\frac{u_1}{u-a}-\alpha\Phi_1\right)^2\\\nonumber
&\ +p(p-1)\frac{F}{\kappa_1}  \frac{ u_1^2}{u^2}\\\nonumber
\geq &\ C\kappa_1-\epsilon \frac{F}{\kappa_1}\frac{u_1^2}{(u-a)^2}-C\alpha-C\\\nonumber
\geq &\ C\kappa_1-C\epsilon \kappa_1-C\alpha-C\\\nonumber
\geq&\  (C-C\epsilon)\kappa_1-C\alpha-C\geq 0,
\end{align}
by choosing $\epsilon$ sufficiently small and assuming $\kappa_1$ sufficiently large.

Plugging (\ref{Curvature-20}) into (\ref{Curvature-19}), we have
\begin{align*}
0\geq (k-p)F \kappa_1 -C\alpha-C.
\end{align*}

It follows that $\kappa_1\leq C$.

\medskip

{\bf Case 2-2:} For each $2\leq l\leq k$, there exists $\delta_l$ such that $\kappa_l\geq \delta_l\kappa_1$.

In this case, by (\ref{Curvature-15-1}), we have
\begin{align*}
\alpha^{-\frac{k-2}{n}}\geq \kappa_2\cdots \kappa_{k-1}\geq \delta_2\cdots\delta_{k-1}\kappa_1^{k-1}.
\end{align*}

It follows that $\kappa_1\leq C$.

The theorem is now proved.

\end{proof}

\begin{rema}
By the proof above, the estimate holds for $p\leq 1+\epsilon_0$ as well, where $\epsilon_0>0$ is a small constant depending only on $n,k,p,\|M\|_{C^1}, \inf \varphi$ and $\|\varphi\|_{C^2}$.
\end{rema}

\end{document}